
\documentclass[10pt,reqno]{amsart}
\textheight 8in
\textwidth 5.5 in
\voffset -0.3in
\hoffset -0.6in
\usepackage{amsmath}
\usepackage{amsfonts}
\usepackage{amssymb}
\usepackage{graphicx}
\usepackage{color}
\parindent 6pt
\parskip 4pt

\newtheorem{theorem}{Theorem}[section]\newtheorem{thm}[theorem]{Theorem}
\newtheorem*{theorem*}{Theorem}
\newtheorem{lemma}{Lemma}[section]
\newtheorem{corollary}[theorem]{Corollary}

\newtheorem{prop}{Proposition}[section]
\newtheorem{definition}[theorem]{Definition}

\newtheorem{remark}[theorem]{Remark}


\def\Ric{\text{Ric}}

\def\a{\alpha}
\def\l{\lambda}

\def\p{\partial}

\def\R{\mathbb{R}}

\def\vp{\varphi}

\def\Ric{\operatorname{Ric}}



\numberwithin{equation}{section}

\begin{document}

\title[First Robin Eigenvalue of the $p$-Laplacian on Riemannian Manifolds]{First Robin Eigenvalue of the $p$-Laplacian on Riemannian Manifolds}

\author{Xiaolong Li}
\address{Department of Mathematics, University of California, Irvine, Irvine, CA 92697, USA}
\email{xiaolol1@uci.edu}

\author{Kui Wang}\thanks{The research of the second author is supported by NSFC No.11601359} 
\address{School of Mathematical Sciences, Soochow University, Suzhou, 215006, China}
\email{kuiwang@suda.edu.cn}

\subjclass[2010]{35P15, 35P30, 58C40, 58J50}
\keywords{Robin eigenvalue, $p$-Laplacian, eigenvalue comparison, Barta's inequality}

\maketitle

\begin{abstract}  
We consider the first Robin eigenvalue $\l_p(M,\a)$ for the $p$-Laplacian on a compact Riemannian manifold $M$ with nonempty smooth boundary, with $\a \in \R$ being the Robin parameter. Firstly, we prove eigenvalue comparison theorems of Cheng type for $\l_p(M,\a)$. Secondly, when $\a>0$
we establish sharp lower bound of $\l_p(M,\a)$ in terms of dimension, inradius, Ricci curvature lower bound and boundary mean curvature lower bound, via comparison with an associated one-dimensional eigenvalue problem. The lower bound becomes an upper bound when $\a<0$.  
Our results cover corresponding comparison theorems for the first Dirichlet eigenvalue of the $p$-Laplacian when letting $\a \to +\infty$. 
\end{abstract}

\section{Introduction and Main Results}

The study of first nonzero eigenvalue for elliptic operators plays an important rule in both mathematics and physics, since this constant determines the convergence rate of numerical schemes in numerical analysis, describes the energy of a particle in the ground state in quantum mechanics, and determines the decay rate of heat flows in thermodynamics. Given its physical and mathematical significance, numerous bounds have been established for the first Dirichlet eigenvalue and the first nonzero Neumann eigenvalue of the Laplace operator (see for example \cite{Chavel84}\cite{ShapeOptmizationbook}\cite{LL10}\cite{SYbook}), and many results have been extended to the nonlinear $p$-Laplacian during the last two decades. 

The classical eigenvalue comparison theorem of Cheng \cite{Cheng75a} states that the first Dirichlet eigenvalue of a geodesic ball in an $n$-dimensional complete Riemannian manifold $M^n$ whose Ricci curvature is bounded from below by $(n-1)\kappa$, $\kappa \in \R$, is less than or equal to the first Dirichlet eigenvalue of a geodesic ball of the same radius in a space of constant sectional curvature $\kappa$, and the reversed inequality holds if we assume instead that the sectional curvature of $M$ is bounded from above by $\kappa$ and the radius of the geodesic ball is no greater than the injectivity radius at the center. 
For domains that are not geodesic balls or for general compact Riemannian manifolds with boundary, sharp lower bound estimates of the first Dirichlet eigenvalue of the Laplacian in terms of dimension $n$, inradius $R$, Ricci curvature lower bound $\kappa$, and boundary mean curvature lower bound $\Lambda$ were obtained by Li and Yau \cite{LY80} for $\kappa =\Lambda=0$ and by Kasue \cite{Kasue84} for general $\kappa, \Lambda \in \R$. 
The above-mentioned results have been generalized to the $p$-Laplacian for $1<p<\infty$. Matei \cite{Matei00} and Takeuchi \cite{Takeuchi98} proved Cheng's eigenvalue comparison theorems for the $p$-Laplacian, and Sakurai \cite{Sakurai19} obtained Li-Yau and Kause's theorem for the $p$-Laplacian on smooth metric measure spaces with boundary (including compact Riemannian manifolds with boundary). 

For either closed manifolds or compact manifolds with convex boundary and the Neumann boundary condition, sharp lower bound estimates of the first nonzero (closed or Neumann) eigenvalue in terms of dimension $n$, diameter $D$ and Ricci curvature lower bound $\kappa$ were established via the efforts of many mathematicians including Li \cite{Li79}, Li and Yau \cite{LY80}, Zhong and Yang \cite{ZY84}, Kr\"oger \cite{Kroger92}, and Bakry and Qian \cite{BQ00}. Their proofs use the gradient estimates method, together with comparisons with suitable one-dimensional models. Chen and Wang independently proved it using stochastic method \cite{CW94,CW95}.
In 2013, a simple alternative proof was given by Andrews and Clutterbuck \cite{AC13} (see also \cite{WZ17} for an alternative elliptic proof inspired by \cite{AC11,AC13} and \cite{Ni13}). They introduced the new method of estimating the modulus of continuity and the trick of reading the size of the first nonzero eigenvalue from the large time behavior of the diffusion. 
For the $p$-Laplacian, sharp lower bounds of the first nonzero eigenvalue, in terms of dimension, diameter and Ricci lower bound $\kappa$, were proved by Valtorta \cite{Valtorta12} for $\kappa=0$ and by Naber and Valtorta \cite{NV14} for general $\kappa \in \R$. Finally, extensions to the weighted Laplacian on Bakry-\'Emery manifolds were obtained by Bakry and Qian \cite{BQ00} via the gradient estimate method and by Andrews and Ni \cite{AN12} via the modulus of continuity approach, and further generalizations to the weighted $p$-Laplacian on smooth metric measure spaces were proved in \cite{Koerber18}\cite{LTW20}\cite{LW19eigenvalue, LW19eigenvalue2}\cite{Tu20}. It is also worth mentioning that the modulus of continuity estimates have been extended to viscosity solutions in \cite{Li16}\cite{LW17} and recently to fully nonlinear parabolic equations in \cite{Li20modulus}.

The Robin boundary condition $\frac{\p u}{\p \nu} +\a u =0$ ($\a \in \R$), interpolating the Neumann condition (with $\a =0$) and the Dirichlet condition (with $\a =+\infty$), however, did not receive as much attention as either of them.  
In thermodynamics, Robin boundary condition models heat diffusion with absorbing $(\a >0)$ or radiating $(\a<0)$ boundary. 
In recent years, many authors investigated eigenvalue problems with the Robin boundary condition (see for example \cite{ACH20}\cite{Laugesen19}\cite{LWW20}\cite{Savo20} and the references therein). In particular, Savo \cite{Savo20} established lower bound estimates of the first Robin eigenvalue of the Laplacian in terms of dimension, inradius, Ricci curvature lower bound and boundary mean curvature lower bound. When letting $\a \to \infty$, it reduces to the classical results of Li and Yau \cite{LY80} and Kasue \cite{Kasue84}.

The purpose of the present paper is to study the first Robin eigenvalue of the $p$-Laplacian on compact Riemannian manifolds with boundary. In particular, we will establish Cheng's eigenvalue comparison theorem (see Theorem \ref{Thm A} below) and sharp bounds for the first Robin eigenvalue of the $p$-Laplacian (see Theorem \ref{Thm C} below). 

Let $(M^n, g)$ be an $n$-dimensional smooth compact Riemannian manifold with smooth boundary $\p M \neq \emptyset$. 
Let $\Delta_p$ denote the $p$-Laplacian defined for $1<p<\infty$ by
\begin{equation*}
    \Delta_{p} u:= \text{div} (|\nabla u|^{p-2}\nabla u),
\end{equation*}
for $u\in W^{1,p}(M)$. When $p=2$, the $p$-Laplacian becomes the Laplacian. 
We consider the following eigenvalue problem with Robin boundary condition
\begin{equation}\label{eq 1.1}
    \begin{cases}
    -\Delta_{p} v  = \l |v|^{p-2} v, & \text{ in } M, \\
     \frac{\p v}{\p \nu}|\nabla v|^{p-2} +\a |v|^{p-2}v = 0, & \text{ on } \p M,  
    \end{cases}
\end{equation}
where $\nu$ denotes the outward unit normal vector field along $\p M$ and $\a \in \R$ is called the Robin parameter.  
The first Robin eigenvalue for $\Delta_p$, denoted by $\l_p(M,\a)$, is the smallest number  such that \eqref{eq 1.1} admits a weak solution in the distributional sense. Moreover, it can be characterized as 
\begin{equation}\label{eq 1.2}
    \l_{p}(M,\a) =\inf \left\{ \int_M |\nabla u|^p  d\mu_g +\a \int_{\p M} |u|^p dA: u \in W^{1,p}(M), \int_M |u|^p d\mu_g=1   \right\},
\end{equation}
where $d\mu_g$ is the Riemnnian measure induced by the metric $g$ and $dA$ is the induced measure on $\p M$. 
When $\a =0$, this reduces to the Neumann eigenvalue problem and we have $\l_{p}(M,0) =0$ with constants being the corresponding eigenfunctions. 
Hence we assume $\a \neq 0$ throughout the paper. 
It's easy to see from \eqref{eq 1.2} that  $\l_{p}(M,\a)>0$ if $\a >0$ and $\l_{p}(M,\a) < 0$ if $\a  < 0$. 
Indeed, $\l_{p}(M,\a)$ is an increasing function of $\a$ and it converges to the first Dirichlet eigenvalue of $\Delta_{p}$ as $\a \to +\infty$. 
Moreover, the first Robin eigenvalue $\l_p(M,\a)$ 
is simple and the first eigenfunction has a constant sign, thus can always be chosen to be positive. Note that the first eigenfunction is in general not smooth if $p\neq 2$, but belongs to $C^{1,\gamma}(\overline{M})$ for some $0<\gamma<1$, as proved by L\^{e} \cite{Le06}. 

We now state Cheng's eigenvalue comparison theorem for $\l_p(M,\a)$, 
which seems to be new even for the Laplacian. 
\begin{thm}\label{Thm A}
Let $M^n(\kappa)$ denote the simply-connected $n$-dimensional space form with constant sectional curvature $\kappa$ and let $V(\kappa,R)$ be a geodesic ball of radius $R$ in $M^n(\kappa)$. 
Let $M^n$ be an $n$-dimensional complete Riemannian manifold and $B_R(x_0) \subset M $ be the geodesic ball of radius $R$ centered at $x_0$.
(We always have $R < \frac{\pi}{\sqrt{\kappa}}$ if $\kappa >0$ in view of the Myers theorem).
\begin{enumerate}
    \item Suppose $\Ric \geq (n-1)\kappa$ on $B_R(x_0)$. Then
    \begin{align*}
        \l_p(B_R(x_0), \a) & \le \l_p(V(\kappa, R), \a), \text{ if } \a >0, \\
        \l_p(B_R(x_0), \a) & \ge \l_p(V(\kappa, R), \a),
        \text{ if } \a < 0.
    \end{align*}
    \item Let $\Omega \subset B_R(x_0)$ be a domain with smooth boundary. 
    Suppose $\operatorname{Sect} \leq \kappa$ on $\Omega$ and $R$ is less than the injectivity radius at $x_0$. Then 
    \begin{align*}
        \l_p(\Omega, \a) & \ge \l_p(V(\kappa, R), \a), \text{ if } \a >0, \\
        \l_p(\Omega, \a) & \le \l_p(V(\kappa, R), \a),
        \text{ if } \a < 0.
    \end{align*}
\end{enumerate}
Moreover, the equality holds
    if and only if $B_R(x_0)$ (or $\Omega$) is isometric to $V(\kappa, R)$.
\end{thm}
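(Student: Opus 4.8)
The plan is to reduce everything to the one-dimensional radial eigenvalue problem on the model ball $V(\kappa,R)$ and to transport its first eigenfunction to $M$ along the distance function $r(x)=d(x,x_0)$. Let $\phi$ denote the first Robin eigenfunction of $V(\kappa,R)$; by rotational symmetry and simplicity it is radial, and it solves $-(J_\kappa|\phi'|^{p-2}\phi')'=\lambda J_\kappa|\phi|^{p-2}\phi$ on $[0,R]$ with $\phi'(0)=0$ and $|\phi'(R)|^{p-2}\phi'(R)+\alpha\phi(R)^{p-1}=0$, where $J_\kappa=s_\kappa^{\,n-1}$ is the model volume density and $\lambda=\lambda_p(V(\kappa,R),\alpha)$. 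A sign analysis of this ODE shows $\phi>0$, and that $\phi$ is strictly decreasing when $\alpha>0$ (so $|\phi'|^{p-2}\phi'\le0$) and strictly increasing when $\alpha<0$ (so $|\phi'|^{p-2}\phi'\ge0$). Setting $f=\phi\circ r$, I have $|\nabla f|=|\phi'(r)|$, and I will use two complementary devices: the variational characterization \eqref{eq 1.2} with $f$ as test function, which yields \emph{upper} bounds for $\lambda_p$, and a Robin version of Barta's inequality (via the Picone identity for the $p$-Laplacian), which yields \emph{lower} bounds. The sign of $\alpha$ dictates the monotonicity of $\phi$ and hence which device produces the asserted direction.

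For part (1) the domain is the geodesic ball $B_R(x_0)$ itself, so its boundary is the sphere $\{r=R\}$ on which the outward normal is $\nabla r$. When $\alpha>0$ I feed $f$ into \eqref{eq 1.2}; writing the numerator in geodesic polar coordinates and using the Bishop--Gromov fact that $J(r,\theta)/J_\kappa(r)$ is non-increasing together with the elementary identity $\int_0^r(|\phi'|^p-\lambda\phi^p)J_\kappa\,ds=-\phi(r)(-\phi'(r))^{p-1}J_\kappa(r)\le0$, one integration by parts in $r$ gives $\mathcal R_{B_R}(f)\le\lambda$ (truncating radial integrals at the cut distance only adds non-positive contributions), hence $\lambda_p(B_R,\alpha)\le\lambda_p(V(\kappa,R),\alpha)$. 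When $\alpha<0$ I apply Barta's inequality to $f$: the interior inequality $-\Delta_p f\ge\lambda f^{p-1}$ follows from $-\Delta_p f=\lambda\phi^{p-1}+|\phi'|^{p-2}\phi'\,(\Delta_\kappa r-\Delta r)$ together with $|\phi'|^{p-2}\phi'\ge0$ and the Laplacian comparison $\Delta r\le\Delta_\kappa r$ (valid distributionally, the singular part along the cut locus carrying the favorable sign), while the boundary term vanishes identically because $\{r=R\}$ carries the model Robin condition. This gives $\lambda_p(B_R,\alpha)\ge\lambda_p(V(\kappa,R),\alpha)$.

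Part (2) is parallel but with the two devices interchanged (Barta for $\alpha>0$, the test function for $\alpha<0$) and the reversed comparison $\Delta r\ge\Delta_\kappa r$ coming from $\operatorname{Sect}\le\kappa$; here $R<\mathrm{inj}(x_0)$, so $r$ is smooth and there is no cut locus. The new difficulty is that $\Omega$ is an arbitrary subdomain, so $\partial\Omega$ meets the spheres $\{r=\rho\}$, $\rho\le R$, transversally rather than orthogonally. Both devices reduce, after one integration by parts, to a bulk term controlled by the sign of $|\phi'|^{p-2}\phi'\,(\Delta_\kappa r-\Delta r)$ (handled exactly as in part (1)) plus a boundary term whose sign on $\partial\Omega$ is that of $|\phi'|^{p-2}\phi'\,\tfrac{\partial r}{\partial\nu}+\alpha\phi^{p-1}$. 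Since $\tfrac{\partial r}{\partial\nu}\le1$, this expression is squeezed between the model value $0$ (attained when $\tfrac{\partial r}{\partial\nu}=1$) and $h(r):=|\phi'(r)|^{p-2}\phi'(r)+\alpha\phi(r)^{p-1}$, so everything comes down to the sign of $h$ on $[0,R]$.

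The main obstacle is therefore the following one-dimensional lemma: $h(R)=0$ and $h$ keeps the sign of $\alpha$ throughout $[0,R]$, equivalently the ratio $Q(r):=|\phi'|^{p-2}\phi'/\phi^{p-1}$ is monotone (decreasing for $\alpha>0$, increasing for $\alpha<0$), with $Q(0)=0$ and $Q(R)=-\alpha$. I expect to prove this from the Riccati equation $Q'=-\lambda-Q\,\Delta_\kappa r-(p-1)|Q|^{p/(p-1)}$ satisfied by $Q$, or, more conceptually, by observing that the restriction $\phi|_{[0,\rho]}$ is the first Robin eigenfunction of the smaller ball $V(\kappa,\rho)$ with parameter $-Q(\rho)$ and the same eigenvalue $\lambda$, and then invoking the strict monotonicity of the model eigenvalue in both the radius and the Robin parameter to pin down the sign of $Q'$. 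Granting this lemma, all four inequalities follow. Finally, equality forces equality throughout: in the Bishop--Gromov (respectively Hessian) comparison on all of $B_R(x_0)$ (respectively $\Omega$) and the proportionality $u=c\,(\phi\circ r)$ in Picone's identity; the rigidity of volume/Hessian comparison then yields that the domain is isometric to $V(\kappa,R)$.
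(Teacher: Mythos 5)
Your proposal is correct and, in three of the four cases, is essentially the paper's own argument: the paper likewise transports the radial model eigenfunction as $v=\varphi\circ r$, applies the Laplacian comparison theorem, and feeds the resulting sub/supersolution into a Robin--Barta inequality proved via Picone's identity for $\Delta_p$ (their Theorem 3.1); your key one-dimensional lemma on $Q=|\varphi'|^{p-2}\varphi'/\varphi^{p-1}$ is exactly their Proposition 2.1(4)--(5), and your Riccati route to it is their proof, with the caveat that the Riccati equation alone does not suffice --- one must differentiate it at an interior critical point and use the strict log-concavity $(\log sn_\kappa^{n-1})''<0$ of the model density to reach the contradiction, which is precisely the paper's mechanism. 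The genuine divergence is part (1) with $\alpha>0$: the paper again uses Barta, showing $v$ is a distributional subsolution (since $\varphi'<0$ and $\Delta r\le\Delta_\kappa r$) and invoking part (2) of its Barta theorem, whereas you run Cheng's original polar-coordinate computation, combining the identity $\int_0^r(|\varphi'|^p-\lambda\varphi^p)J_\kappa\,ds=\varphi|\varphi'|^{p-2}\varphi' J_\kappa\le 0$ with Bishop--Gromov monotonicity of $J/J_\kappa$. Both are valid: yours handles the cut locus by a sign observation inside the integral (truncated directions contribute nonpositively and carry no boundary area), where the paper appeals to an approximation argument; the paper's is more uniform, one tool for all cases --- and since its Barta part (2) is itself proved by plugging the subsolution into the Rayleigh quotient, the two computations are close cousins. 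Your alternative ``restriction'' proof of the key lemma --- reading $-Q(\rho)$ as the Robin parameter realized by $\varphi|_{[0,\rho]}$ on $V(\kappa,\rho)$ at the same eigenvalue, then invoking strict monotonicity of the model eigenvalue in the radius and in the parameter --- is genuinely different from anything in the paper and would work, provided you prove the radius-monotonicity for this ball-type problem (the analogue of their Proposition 2.1(3)). Two small points: your ``squeeze'' in part (2) is misstated, since when $\partial r/\partial\nu=1$ the boundary expression equals $h(r)$ rather than $0$, but the inequality you actually use --- the expression lies on the same side of $0$ as $h$, and $h$ carries the sign of $\alpha$ --- is correct; and your rigidity sketch, while brief, is no less complete than the paper's, which simply defers to the equality clause of its Barta theorem.
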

\begin{remark}
Letting $\a \to +\infty$ in Theorem \ref{Thm A} yields Cheng's eigenvalue comparison theorems proved in \cite{Cheng75a} for the Laplacian and \cite{Matei00}\cite{Takeuchi98} for the $p$-Laplacian.
\end{remark}
\begin{remark}
Domain monotonicity (if $\Omega_1 \subset \Omega_2$, then the first Dirichlet eigenvalue of $\Omega_1$ is bigger than that of $\Omega_2$) is a fundamental property for first Dirichlet eigenvalue, but it fails for the first Robin eigenvalue, even for convex Euclidean domains \cite{GS05}. Part (2) of Theorem \ref{Thm A} can be viewed as a domain monotonicity result for $\l_p(M,\a)$, as it implies that domain monotonicity holds for $\a>0$ (reversed domain monotonicty for $\a <0$) in space forms when the outer domain is a ball. Indeed, our proof shows that when the outer domain is a ball, domain monotonicty holds on the warped product manifolds of the form $[0,T] \times S^{n-1}$ with metric $g=dr^2 + f^2(r) g_{S^{n-1}}$, provided that the warping function $f$ is strictly log-concave. 
\end{remark}

We introduce some notations before stating the next theorem. Let $R$ denote the inradius of $M$ defined by 
$$R=\sup\{d(x, \p M):x\in M \}.$$ 
Let $C_{\kappa, \Lambda}(t)$ be the unique solution of 
\begin{equation*}
\begin{cases}
C_{\kappa, \Lambda}'' +\kappa \, C_{\kappa, \Lambda}(t) =0, \\ C_{\kappa, \Lambda}(0)=1, \\
C_{\kappa, \Lambda}'(0) =-\Lambda,
\end{cases}
\end{equation*}
and define
$$T_{\kappa,\Lambda}(t) :=\frac{C'_{\kappa, \Lambda}(t)}{C_{\kappa, \Lambda}(t)}.$$ 

Our second main theorem states 
\begin{thm}\label{Thm C}
Let $(M^n,g)$ be a compact Riemannian manifold with boundary $\p M \neq \emptyset$. 
Suppose that the Ricci curvature of $M$ is bounded from below by $(n-1)\kappa$ and the mean curvature of $\p M$ is bounded from below by $(n-1)\Lambda$ for some $\kappa, \Lambda \in \R$. 
Let $\l_{p}(M,\a)$ be the first Robin eigenvalue of the $p$-Laplacian on $M$. Then 
\begin{align*}
   & \l_{p}(M,\a) \geq \bar\l_p\left([0,R],\a \right), \text{ if } \a >0, \\
   & \l_{p}(M,\a) \leq \bar\l_p\left([0,R],\a \right), \text{ if } \a <0,
\end{align*}
where $\bar\l_p\left([0,R],\a \right)$
is the first eigenvalue of the one-dimensional eigenvalue problem 
\begin{equation}\label{eq 1.4}
    \begin{cases}
   (p-1)|\vp'|^{p-2} \vp'' +(n-1)T_{\kappa, \Lambda} |\vp'|^{p-2}\vp' =-\l |\vp|^{p-2}\vp, \\
     |\vp'(0)|^{p-2}\vp'(0)=\a |\vp(0)|^{p-2}\vp(0), \\
    \vp'(R)=0.
    \end{cases}
\end{equation}
%
Moreover, the equality occurs if and only if $(M,g)$ is a $(\kappa, \Lambda)$-model space defined in Definition \ref{def model space}. 
\end{thm}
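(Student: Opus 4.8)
The plan is to build a comparison function on $M$ out of the one--dimensional eigenfunction and the distance to the boundary, and then to run a Barta/Picone argument against it. Write $\rho(x)=d(x,\p M)$ for the distance to the boundary, so that $R=\sup_M\rho$ is the inradius and $|\nabla\rho|=1$ almost everywhere. The curvature hypotheses $\Ric\ge (n-1)\kappa$ and mean curvature $\ge (n-1)\Lambda$ feed into the Laplacian comparison theorem for the distance to the boundary, which gives
\begin{equation*}
\Delta\rho\le (n-1)\,T_{\kappa,\Lambda}(\rho)
\end{equation*}
wherever $\rho$ is smooth, and in the distributional (barrier) sense on all of $M$, the singular contribution along the cut locus of $\p M$ having the favorable sign. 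In parallel I would first record the basic facts about the one--dimensional problem \eqref{eq 1.4}: existence of the first eigenvalue $\bar\l:=\bar\l_p([0,R],\a)$, positivity of a first eigenfunction $\bar\vp$, and, crucially, the monotonicity of $\bar\vp$. The boundary relation $|\vp'(0)|^{p-2}\vp'(0)=\a|\vp(0)|^{p-2}\vp(0)$ forces $\vp'(0)$ to have the sign of $\a$, and an analysis of the ODE together with the Neumann condition $\vp'(R)=0$ should show $\bar\vp>0$ with $\bar\vp'\ge 0$ on $[0,R]$ when $\a>0$ and $\bar\vp'\le 0$ when $\a<0$.

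Next I would set $w(x)=\bar\vp(\rho(x))$ and compute, on the smooth part of $\rho$,
\begin{equation*}
\Delta_p w=(p-1)|\bar\vp'|^{p-2}\bar\vp''+|\bar\vp'|^{p-2}\bar\vp'\,\Delta\rho,
\end{equation*}
the arguments being evaluated at $\rho$. For $\a>0$ the factor $|\bar\vp'|^{p-2}\bar\vp'\ge 0$, so inserting the Laplacian comparison and then the ODE \eqref{eq 1.4} yields $-\Delta_p w\ge \bar\l\,w^{p-1}$; for $\a<0$ the factor is $\le 0$, the comparison reverses upon multiplication, and one gets $-\Delta_p w\le \bar\l\,w^{p-1}$. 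A direct computation using $\nu=-\nabla\rho$ on $\p M$ shows that $w$ satisfies the Robin boundary condition \emph{exactly}, precisely because of the boundary condition imposed in \eqref{eq 1.4}; this is the key compatibility that handles the Robin (rather than Dirichlet) term.

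To conclude I would use the Picone identity for the $p$-Laplacian. For $\a>0$, testing with the first Robin eigenfunction $u>0$ of $M$ gives
\begin{equation*}
\int_M|\nabla u|^p\,d\mu_g\ge \int_M |\nabla w|^{p-2}\nabla w\cdot\nabla\Big(\tfrac{u^p}{w^{p-1}}\Big)\,d\mu_g,
\end{equation*}
and integrating by parts, inserting $-\Delta_p w\ge\bar\l w^{p-1}$ and the exact boundary identity $|\nabla w|^{p-2}\tfrac{\p w}{\p\nu}=-\a w^{p-1}$, produces $\int_M|\nabla u|^p+\a\int_{\p M}u^p\ge\bar\l\int_M u^p$, which is the desired lower bound via \eqref{eq 1.2}. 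For $\a<0$ I would instead use $w$ itself as a test function in \eqref{eq 1.2}; integrating $\int_M|\nabla w|^p$ by parts and using $-\Delta_p w\le \bar\l w^{p-1}$ with the same boundary identity gives $\int_M|\nabla w|^p+\a\int_{\p M}w^p\le\bar\l\int_M w^p$, i.e. $\l_p(M,\a)\le\bar\l$.

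The main obstacle is twofold and technical rather than conceptual. First is the rigorous treatment of the cut locus of $\p M$: since $\rho$ fails to be $C^2$ there, the inequality $-\Delta_p w\ge\bar\l w^{p-1}$ must be read distributionally, and one must verify that the singular part of $\Delta\rho$ does not spoil the integration by parts; this is standard once one knows $\Delta\rho\le(n-1)T_{\kappa,\Lambda}(\rho)$ as distributions with a nonpositive singular measure. Second is the monotonicity and sign analysis of $\bar\vp$, which is what keeps the sign of $|\bar\vp'|^{p-2}\bar\vp'$ under control and thereby fixes the direction of the inequality in each case. Finally, for the rigidity statement, equality propagates back through Picone's inequality (forcing $u$ proportional to $w$) and through the Laplacian comparison (forcing $\Delta\rho=(n-1)T_{\kappa,\Lambda}(\rho)$ identically); the equality case of the comparison theorem then forces the level sets of $\rho$ to be umbilic with the model second fundamental form, identifying $(M,g)$ with the $(\kappa,\Lambda)$-model space of Definition \ref{def model space}.
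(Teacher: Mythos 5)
Your proposal is correct and follows essentially the same route as the paper: compose the first eigenfunction of the one-dimensional problem \eqref{eq 1.4} with $d(x,\p M)$, apply the Laplacian comparison $\Delta d(x,\p M)\le (n-1)T_{\kappa,\Lambda}$, check the exact Robin compatibility at $\p M$, and conclude via the Picone-based Barta inequality (Theorem \ref{Thm Barta}), with rigidity from the equality case of Picone plus Kasue/Savo's argument. The only place where the paper is more careful is the cut locus, which it handles via Sakurai's exhaustion lemma (the sets $\Omega_k$ with $\langle \nu_k,\nabla d(x,\p M)\rangle\ge 0$) rather than by invoking a distributional sign of $\Delta d$, but this is exactly the sign mechanism you identified.
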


\begin{remark}
When $p=2$, the above theorem is due to Savo \cite{Savo20}. His proof made use of the Green's formula and does not seem to work for the $p$-Laplacian. Our proof uses instead a Picone's identity for $\Delta_p$ proved in \cite{AH98}. 
\end{remark}

\begin{remark}
Letting $\a \to +\infty$ in Theorem \ref{Thm C} yields the optimal lower bound for the first Dirichlet eigenvalue of $\Delta_{p}$, which was obtained by Kasue \cite{Kasue84} for $p=2$, and by Sakurai \cite{Sakurai19} for general $1< p <\infty$. 
\end{remark}

\begin{remark}
As in \cite[pages 26-28]{Savo20}, given any $\kappa, \Lambda \in \R$ and $R>0$, one can construct an $n$-dimensional manifold $\bar\Omega:=\bar\Omega(\kappa, \Lambda, R)$ with two boundary component $\Gamma_1$ and $\Gamma_2$, such that the first eigenvalue of $\Delta_p$ on $\bar\Omega$ with Robin boundary condition on $\Gamma_1$ and Neuman boundary condition on $\Gamma_2$ coincides with $\bar\l_p([0,R],\a)$.  
\end{remark}

The boundary conditions of the one-dimensional eigenvalue problem \eqref{eq 1.4} are Robin at $t=0$ and Neumann at $t=R$. When $\kappa =\Lambda =0$, the first eigenvalue of problem \eqref{eq 1.4} (with $T_{\kappa, \Lambda} \equiv 0$) is indeed equal to the first Robin eigenvalue of problem \eqref{eq 1.6} (see Proposition \ref{prop 2.2} below). Thus, we get an eigenvalue comparison theorem between the first Robin eigenvalue of the $n$-dimensional manifold $M$ and the first Robin eigenvalue of a one-dimensional eigenvalue problem.  
\begin{thm}
Let $(M^n,g)$ be the same as in Theorem \ref{Thm C}. Suppose $\kappa =\Lambda=0$. Then 
\begin{align*}
    &\l_p(M,\a) \geq \mu_p\left([0,2R], \a \right) \text{ if } \a >0, \\
    & \l_p(M,\a) \leq \mu_p\left([0,2R], \a \right) \text{ if } \a < 0, 
\end{align*}
where $\mu_p\left([0,2R], \a \right) $ is the first Robin eigenvalue of the one-dimensional problem 
\begin{equation}\label{eq 1.6}
    \begin{cases}
   (p-1)|\vp'|^{p-2} \vp''  =- \l |\vp|^{p-2}\vp, \\
   |\vp'(0)|^{p-2}\vp'(0)=\a |\vp(0)|^{p-2}\vp(0), \\
    |\vp'(2R)|^{p-2}\vp'(2R)=-\a |\vp(2R)|^{p-2}\vp(2R).
    \end{cases}
\end{equation}
\end{thm}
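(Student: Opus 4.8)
The plan is to derive this theorem from Theorem \ref{Thm C} together with a reflection argument that matches the two one-dimensional eigenvalue problems \eqref{eq 1.4} and \eqref{eq 1.6}. First I would specialize Theorem \ref{Thm C} to $\kappa=\Lambda=0$. In this case $C_{\kappa,\Lambda}$ solves $C''=0$ with $C(0)=1$ and $C'(0)=0$, so $C_{\kappa,\Lambda}\equiv 1$ and hence $T_{\kappa,\Lambda}\equiv 0$. Consequently the profile equation in \eqref{eq 1.4} loses its first-order term and reduces to the autonomous equation $(p-1)|\vp'|^{p-2}\vp''=-\l|\vp|^{p-2}\vp$ on $[0,R]$, carrying a Robin condition at $t=0$ and the Neumann condition $\vp'(R)=0$ at $t=R$. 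Theorem \ref{Thm C} then already gives $\l_p(M,\a)\ge \bar\l_p([0,R],\a)$ for $\a>0$ and the reverse inequality for $\a<0$, so the whole statement follows once I establish the identity $\bar\l_p([0,R],\a)=\mu_p([0,2R],\a)$, which is the content of Proposition \ref{prop 2.2}.

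To prove this identity I would exploit the reflection symmetry that the vanishing of $T_{\kappa,\Lambda}$ restores. The operator in \eqref{eq 1.6} is autonomous, and the substitution $\psi(t):=\vp(2R-t)$ maps solutions of \eqref{eq 1.6} to solutions: a direct computation shows the equation is preserved, while the Robin condition at $t=0$ is carried to the Robin condition at $t=2R$ and vice versa, so the boundary value problem \eqref{eq 1.6} on $[0,2R]$ is invariant under $t\mapsto 2R-t$. Let $\Phi>0$ be a first eigenfunction of \eqref{eq 1.6}. Then $\Phi(2R-\cdot)$ is again a positive first eigenfunction, and by simplicity of the first Robin eigenvalue of the one-dimensional $p$-Laplacian it must agree with $\Phi$ up to sign; since both are positive, $\Phi(2R-t)=\Phi(t)$. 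Differentiating at $t=R$ forces $\Phi'(R)=0$, so the restriction $\Phi|_{[0,R]}$ satisfies exactly the autonomous equation with Robin data at $0$ and Neumann data at $R$, i.e. problem \eqref{eq 1.4} with $T_{\kappa,\Lambda}\equiv 0$. Being positive, it is the first eigenfunction there, whence $\bar\l_p([0,R],\a)=\mu_p([0,2R],\a)$.

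As a complementary and entirely elementary check of one inequality, I would argue variationally: given the first eigenfunction $\vp$ of the $[0,R]$ problem, its even reflection $\tilde\vp$ across $t=R$ lies in $W^{1,p}([0,2R])$ precisely because $\vp'(R)=0$ guarantees a $C^1$ match, and a short computation shows $\tilde\vp$ has the same Rayleigh quotient as $\vp$ for the functional defining $\mu_p([0,2R],\a)$ (the energy, the $L^p$ mass, and the boundary term all exactly double). Using $\tilde\vp$ as a test function then yields $\mu_p([0,2R],\a)\le\bar\l_p([0,R],\a)$. The main obstacle is the simplicity and constant sign of the first eigenvalue of the one-dimensional Robin $p$-Laplacian, which is what licenses the passage from ``$\Phi(2R-\cdot)$ is also a first eigenfunction'' to ``$\Phi$ is symmetric''; this is a standard ODE fact but must be checked (or invoked from Section 2), since $p\neq 2$ rules out linear superposition. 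With these ingredients the stated inequalities are immediate from Theorem \ref{Thm C}, and since the sign of $\a$ plays no role in the reflection step, the cases $\a>0$ and $\a<0$ are handled simultaneously.
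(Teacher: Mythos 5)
Your proposal is correct and takes essentially the same route as the paper: the paper also derives this theorem by specializing Theorem \ref{Thm C} to $\kappa=\Lambda=0$ (so that $T_{\kappa,\Lambda}\equiv 0$) and then invoking Proposition \ref{prop 2.2}, whose proof is precisely your reflection argument that a positive first eigenfunction of \eqref{eq 1.6} is symmetric about $t=R$, hence has $\vp'(R)=0$ and restricts to the first eigenfunction of the Robin--Neumann problem on $[0,R]$. If anything, your appeal to simplicity of the first eigenvalue to force $\Phi(2R-t)=\Phi(t)$ is a more careful justification than the paper's terse parenthetical that the eigenfunction ``cannot be odd at $t=R$ since it is positive.''
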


The paper is organized as follows. 
In section 2, we collect some basic properties of the one-dimensional eigenvalue problems. 
An extension of Barta's inequality for the $p$-Laplacian is given in section 3.
The proofs of Theorem \ref{Thm A} and Theorem \ref{Thm C} are presented in section 4 and section 5, respectively. 
The model spaces on which the inequalities in Theorem \ref{Thm C} are achieved are provided in section 6. 

\section{Properties of one-dimensional Models}
In this section, we gather several basic properties of the one-dimensional eigenvalue problems used as comparison models in this paper. 

We will consider slightly more general models. Let $w$ be a positive smooth function on $[0,R]$ satisfying $w(0)=1$. We consider the following one-dimensional eigenvalue problem:
\begin{equation}\label{eq 2.1}
     \begin{cases}
     (p-1)|\vp'|^{p-2} \vp'' + \frac{w'}{w}|\vp'|^{p-2}\vp' =-\l |\vp|^{p-2}\vp, \\
    |\vp'(0)|^{p-2}\vp'(0)=\a |\vp(0)|^{p-2}\vp(0), \\
    \vp'(R)=0.
    \end{cases}
\end{equation}
Let $\bar\l_p([0,R], w, \a)$ be the first eigenvalue of \eqref{eq 2.1}. It's easily seen that
 $\bar\l_p([0,R], w, \a)$ is characterized by
\begin{eqnarray}\label{eq 2.2}
    && \bar\l_p([0,R], w, \a) \nonumber \\
    &=&\inf \left\{\int_0^R |u'|^p w dt +\a  |u(0)|^p : u \in W^{1,p}\left([0,R], w dt\right), \int_0^R |u|^p w dt =1 \right\} .
\end{eqnarray}
It follows from \eqref{eq 2.2} that $\bar\l_p([0,R], w, \a) =0$ if $\a =0$, $\bar\l_p([0,R], w, \a) > 0$ if $\a >0$, and $\bar\l_p([0,R], w, \a) <0$ if $\a <0$. 
Moreover, the first eigenfunction does not change sign and can always be chosen to be positive. 

We prove the following properties of the first eigenfunction:
\begin{prop}\label{prop 2.1}
Let $u>0$ be the positive first eigenfunction associated to $\bar\l_p ([0,R], w, \a)$.
\begin{itemize}
    \item[(1)] If $\a>0$, then $u'>0$ on $[0,R)$.
    \item[(2)] If $\a<0$, then $u'<0$ on $[0,R)$.
    \item[(3)] If $\a >0$ and $\bar{R} < R$, then $\bar\l_p([0,R],w,\a) < \bar\l_p([0,\bar{R}], w, \a) $.
\end{itemize}
  In (4) and (5), assume further that $w$ is strictly log-concave, i.e., $(\log w )'' <0$ on $[0,R)$. \begin{itemize}
    \item[(4)] If $\a >0$, then $\frac {u'}{u}$ is monotone decreasing on $[0,R]$. Particularly, $|\frac{u'}{u}|^{p-1} \leq \a $ on $[0,R]$.
    \item[(5)] If $\a <0$, then $\frac{u'}{u}$ is monotone increasing on $[0,R]$. Particularly, $|\frac{u'}{u}|^{p-1} \leq -\a $ on $[0,R]$.
\end{itemize}
\end{prop}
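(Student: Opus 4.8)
The backbone of the whole argument is the flux (divergence) form of \eqref{eq 2.1}. Since $(p-1)|u'|^{p-2}u''=(|u'|^{p-2}u')'$, multiplying the ODE by $w$ turns it into $(w|u'|^{p-2}u')'=-\l w u^{p-1}$, so the momentum $P:=w\,|u'|^{p-2}u'$ obeys $P'=-\l\,w\,u^{p-1}$ on $[0,R]$, where $\l:=\bar\l_p([0,R],w,\a)$. Recall from the discussion after \eqref{eq 2.2} that $\l>0$ when $\a>0$ and $\l<0$ when $\a<0$; also $\mathrm{sgn}\,P=\mathrm{sgn}\,u'$ because $w>0$, and $u'(R)=0$ gives $P(R)=0$. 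For (1) I note that $\a>0$ forces $P'<0$, hence $P>P(R)=0$ and $u'>0$ on $[0,R)$; (2) is the mirror image with $\l<0$, $P'>0$, so $u'<0$ on $[0,R)$. Since the equation is non-degenerate wherever $u'\neq0$, these conclusions also show $u$ is smooth on $[0,R)$, the point $t=R$ being handled by continuity throughout.

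Part (3) I would prove by the variational characterization \eqref{eq 2.2}. Let $\bar u>0$ be the first eigenfunction on $[0,\bar R]$, normalized so that $\int_0^{\bar R}|\bar u|^p w\,dt=1$, and extend it to $[0,R]$ by the constant value $\bar u(\bar R)$ on $[\bar R,R]$; the resulting $v$ lies in $W^{1,p}([0,R],w\,dt)$. Because $v'\equiv 0$ on $[\bar R,R]$ and $v(0)=\bar u(0)$, the numerator of the Rayleigh quotient of $v$ equals $\int_0^{\bar R}|\bar u'|^p w\,dt+\a|\bar u(0)|^p=\bar\l_p([0,\bar R],w,\a)$, while its denominator equals $1+\bar u(\bar R)^p\int_{\bar R}^R w\,dt>1$. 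As $\a>0$ makes $\bar\l_p([0,\bar R],w,\a)>0$, the quotient is strictly below $\bar\l_p([0,\bar R],w,\a)$, and \eqref{eq 2.2} delivers the strict inequality.

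The substantive part is (4)--(5), and the plan is to track $q:=u'/u$ through the homogeneous quantity $\psi:=|u'|^{p-2}u'/u^{p-1}=|q|^{p-2}q$, for which $|\psi|=|q|^{p-1}$, and whose boundary values are $\psi(0)=\a$ (from the Robin condition) and $\psi(R)=0$. Writing $\sigma:=(\log w)'$ and differentiating $\psi=P/(w u^{p-1})$ using $P'=-\l w u^{p-1}$ gives the first-order relation
\[
\psi'=-\l-\sigma\psi-(p-1)\,|\psi|^{\frac{p}{p-1}}.
\]
Differentiating once more and using $\tfrac{d}{dt}|\psi|^{p/(p-1)}=\tfrac{p}{p-1}\,q\,\psi'$, the function $F:=\psi'$ is seen to satisfy the \emph{linear} equation
\[
F'+(\sigma+pq)\,F=-\sigma'\psi .
\]
This is where strict log-concavity is used, precisely through $\sigma'<0$. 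With the positive integrating factor $\mu:=\exp\!\big(\int_0^t(\sigma+pq)\,ds\big)$ we obtain $(\mu F)'=-\mu\,\sigma'\psi$, whose sign is that of $\psi$. For (4), $\a>0$ gives $\psi>0$ and the terminal value $F(R)=\psi'(R)=-\l<0$; thus $\mu F$ is nondecreasing and stays $\le\mu(R)(-\l)<0$, so $\psi'<0$ on $[0,R)$. For (5), $\a<0$ gives $\psi<0$ and $F(R)=-\l>0$; thus $\mu F$ is nonincreasing and stays $\ge\mu(R)(-\l)>0$, so $\psi'>0$. Since $s\mapsto|s|^{p-2}s$ is increasing, monotonicity of $\psi$ transfers to $q=u'/u$, and the pointwise bounds follow from $|u'/u|^{p-1}=|\psi|\le|\psi(0)|=|\a|$, valid because $\psi$ runs monotonically between $\psi(0)=\a$ and $\psi(R)=0$.

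I expect the main difficulty to lie in (4)--(5): the choice of the \emph{homogeneous} variable $\psi=|q|^{p-2}q$ rather than $q$ itself is what collapses the nonlinear Riccati equation for $\psi$ into a linear first-order equation for $F=\psi'$, and one must check that the inhomogeneity is exactly $-\sigma'\psi$ so that log-concavity has a clear-cut sign. A secondary technical issue is the degeneracy at $t=R$, where $u'=0$ and (for $p\neq2$) the coefficient $pq$ and the exponent $p/(p-1)$ require care; this is resolved by running the integrating-factor argument on $[0,R)$, where $u'\neq0$ ensures smoothness, and passing to the limit $t\to R^-$ via the continuous extensions $\psi(R)=0$ and $F(R)=-\l$.
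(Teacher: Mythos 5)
Your proof is correct, but it deviates from the paper's in an interesting way at several points. For (1)--(2) the paper does not use your flux argument: it proves (1) by contradiction, truncating $u$ at the first zero $r$ of $u'$, prolonging by the constant $u(r)$, and showing this test function beats the infimum in \eqref{eq 2.2}; it proves (2) by noting that past a zero of $u'$ the function $u$ would be a Neumann eigenfunction on $[r,R]$ with the negative eigenvalue $\bar\l_p([0,R],w,\a)<0$, which is impossible. Your observation that $P:=w|u'|^{p-2}u'$ satisfies $P'=-\l w u^{p-1}$, together with $P(R)=0$ and the sign of $\l$, handles both cases at once and is arguably cleaner. Part (3) coincides with the paper's (which is only sketched there, ``prolong by a constant and test''). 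For (4)--(5), both arguments start from the same Riccati identity: the paper's equation \eqref{eq 2.3} for $v=u'/u$ is literally your equation for $\psi=|v|^{p-2}v$, since $(v|v|^{p-2})'=\psi'$. The divergence is in how the sign of $\psi'$ is extracted: the paper argues locally by contradiction, producing (if $v$ were not monotone) an interior point $r$ with $v'(r)=0$, $v''(r)\le 0$, and differentiating \eqref{eq 2.3} at $r$ to get $0=(p-1)v''|v|^{p-2}+\left(\frac{w'}{w}\right)'|v|^{p-2}v<0$; you instead differentiate the Riccati equation globally, observe that $F=\psi'$ satisfies the \emph{linear} equation $F'+(\sigma+pq)F=-\sigma'\psi$, and integrate backwards from the terminal value $F(R)=-\l$ with an integrating factor. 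Both routes consume the strict log-concavity $\sigma'<0$ at exactly the same step. What yours buys: a global, quantitative conclusion ($\psi'<0$ strictly on all of $[0,R)$, not just monotonicity), no appeal to the existence of an interior critical point with the right second-order sign (a step the paper leaves implicit), and explicit handling of the degeneracy at $t=R$ where $u'=0$ and, for $p\neq 2$, the equation degenerates. What the paper's buys: brevity. One small caution in your write-up: the linear ODE for $F$ and the integrating factor $\mu$ require $\psi\neq 0$ and smoothness, which hold on $[0,R)$ precisely because of parts (1)--(2); you note this, and your limit argument at $R^-$ (using $q(R)=0$ so that $\mu$ extends continuously and $F(R)=-\l$ from the Riccati equation) closes the loop correctly.
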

\begin{proof}
(1). If $\a>0$, then $u'(0)>0$ because $|u'(0)|^{p-2} u'(0) =\a |u(0)|^{p-2} u(0) >0$. 
We argue by contradiction and let $r \in (0,R)$ be the first zero of $u'$. Define $v \in W^{1,p}([0,R], w dt)$ by
\begin{equation*}
    v(t) =
    \begin{cases}
    u(t),  & \text{ for } 0\leq t \leq r, \\
    u(r),  & \text{ for } r \leq t\leq R. 
    \end{cases}
\end{equation*}
Then using $v$ as a test function in \eqref{eq 2.2} gives 
\begin{eqnarray*}
&& \int_0^R |v'|^p w\, dt +\a |v(0)|^p \\
&=&\int_0^{r} |u'|^p w\, dt +\a |u(0)|^p \\
&=&u|u'|^{p-2}u'w|_0^{r}-\int_0^{r} (|u'|^{p-2}u' w)'u\, dt +\a w(0)|u(0)|^p\\
&=&\bar\l_p([0,R],w, \a)\int_0^{r} |u|^p w\, dt\\ &<&\bar\l_p([0,R],w,\a)\int_0^{R} |v|^p w\,  dt,
\end{eqnarray*}
contradicting \eqref{eq 2.2}. Thus we have $u'>0$ on $[0, R)$. 

(2). Argue by contradiction again. Let $r\in (0,R)$ be the first zero of $u'$. Then $u$ restricted to $[r,R]$ is an Neumann eigenfunction with Neumann eigenvalue  $\bar\l_p([0,R], w, \a)$ on $[r, R]$. 
This is impossible since Neumann eigenvalue are nonnegative while $\bar\l_p([0,R], w, \a) <0$ if $\a <0$. 

(3). Similar to (1), one prolongs an eigenfunction of $[0,\bar{R}]$ on $[0,R]$ by a constant and use it as a test function in \eqref{eq 2.2} to derive a contradiction. 

(4). Let $v(t)=\frac{u'(t)}{u(t)}$, then $v(0)=\a^{\frac{1}{p-1}}$, $v(r)>0$ for $r\in[0,R)$ and $v(R)=0$.
Direct calculation using 
$$
(p-1)|u'|^{p-2} u'' + \frac{w'}{w}|u'|^{p-2}u' =-\bar\l_p([0,R],w,\a) |u|^{p-2}u
$$
yields 
\begin{equation}\label{eq 2.3}
(v|v|^{p-2})'+\frac{w'}{w}|v|^{p-2}v+(p-1)|v|^p=-\bar\l_p([0,R],w,\a).
\end{equation}
Now we claim that $v(r)$ is monotone decreasing on $[0, R]$. If not, there exists some $r\in(0,R)$ such that
$$v'(r)=0, v''(r)\le 0.$$
Then taking derivative of \eqref{eq 2.3}, we have at $t=r$ that
$$
0=(p-1) v''|v|^{p-2}+\left(\frac{w'}{w}\right)'|v|^{p-2}v<0
$$
by the strict log-concavity of $w$, which is clearly a contradiction.

(5). Similar to the proof of (4). 
\end{proof}

\begin{prop}\label{prop 2.2}
Let $\mu_p([0,2R], \a)$ be the first eigenvalue of the following eigenvalue problem:
\begin{equation}\label{eq 2.4}
     \begin{cases}
      (p-1)|\vp'|^{p-2} \vp'' =-\l |\vp|^{p-2}\vp, \\
    |\vp'(0)|^{p-2}\vp'(0)=\a |\vp(0)|^{p-2}\vp(0), \\
    |\vp'(2R)|^{p-2}\vp'(2R)=-\a |\vp(2R)|^{p-2}\vp(2R) .
    \end{cases}
\end{equation}
Then
\begin{equation*}
    \mu_p([0,2R], \a)= \bar\l_p([0,R], 1, \a).
\end{equation*}
\end{prop}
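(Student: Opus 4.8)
The plan is to exploit the reflection symmetry $t \mapsto 2R - t$ of problem \eqref{eq 2.4}, which interchanges its two Robin endpoints, together with the strict monotonicity of the first Robin--Neumann eigenvalue in the length of the interval (Proposition \ref{prop 2.1}(3)). The starting point is the variational characterization
\begin{equation*}
\mu_p([0,2R],\a) = \inf\left\{ \int_0^{2R} |u'|^p\,dt + \a|u(0)|^p + \a|u(2R)|^p : u \in W^{1,p}([0,2R]),\ \int_0^{2R}|u|^p\,dt = 1 \right\},
\end{equation*}
whose Euler--Lagrange equations are exactly \eqref{eq 2.4}. I will locate the interior critical point of the positive first eigenfunction and show it must sit at the midpoint $t=R$, at which point the restriction of the eigenfunction solves the Robin--Neumann model on $[0,R]$.

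Concretely, let $\psi > 0$ be the first eigenfunction of \eqref{eq 2.4}, so $\psi \in C^1([0,2R])$. The Robin conditions at the two ends together with positivity force $\psi'(0) > 0$ and $\psi'(2R) < 0$ when $\a > 0$ (and the reversed signs when $\a < 0$), so by the intermediate value theorem $\psi'$ vanishes at some interior point $t_0 \in (0, 2R)$; take $t_0$ to be a point where $\psi$ attains its extremum. Then $\psi|_{[0,t_0]}$ satisfies the ODE in \eqref{eq 2.1} with $w \equiv 1$, the Robin condition at $0$, and $\psi'(t_0) = 0$, i.e. the Neumann condition at $t_0$; being sign-definite it is a \emph{first} eigenfunction of that Robin--Neumann problem, whence $\bar\l_p([0,t_0],1,\a) = \mu_p([0,2R],\a)$. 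Reflecting the piece on $[t_0,2R]$ via $s \mapsto \psi(2R - s)$ produces, after the elementary check that the Robin condition is preserved, a positive first eigenfunction of the model on $[0, 2R - t_0]$, so likewise $\bar\l_p([0,2R - t_0],1,\a) = \mu_p([0,2R],\a)$. Therefore $\bar\l_p([0,t_0],1,\a) = \bar\l_p([0,2R-t_0],1,\a)$, and the strict monotonicity of $s \mapsto \bar\l_p([0,s],1,\a)$ forces $t_0 = 2R - t_0$, i.e. $t_0 = R$. This yields $\bar\l_p([0,R],1,\a) = \mu_p([0,2R],\a)$, as claimed.

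Two inputs require care. First, I must invoke the standard fact that for these one-dimensional $p$-Laplacian eigenvalue problems a sign-definite eigenfunction is necessarily associated with the first eigenvalue; this is what lets me read off $\bar\l_p([0,t_0],1,\a) = \mu_p$ from positivity alone. Second, the strict monotonicity in the interval length is stated in Proposition \ref{prop 2.1}(3) only for $\a > 0$; for $\a < 0$ one needs its analogue (with $s \mapsto \bar\l_p([0,s],1,\a)$ strictly increasing), which follows from the same prolongation-by-a-constant test-function argument used there. The main obstacle, and the crux of the proof, is precisely this step of pinning the critical point to the midpoint: without the strict monotonicity the two restrictions would only give $\bar\l_p([0,t_0],1,\a) = \bar\l_p([0,2R-t_0],1,\a)$ without forcing $t_0 = R$, so the monotonicity is exactly what converts the reflection symmetry of the model into the exact identification of the eigenvalues. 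As a consistency check one also has the easy inequality $\mu_p([0,2R],\a) \le \bar\l_p([0,R],1,\a)$, obtained by even-reflecting the $[0,R]$ first eigenfunction about $t=R$ (which is $C^1$ because of its Neumann condition there) and evaluating the displayed Rayleigh quotient.
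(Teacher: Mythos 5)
Your strategy is sound and genuinely different from the paper's. The paper's proof is a symmetry argument: since \eqref{eq 2.4} is invariant under $t \mapsto 2R-t$, the reflection of a positive first eigenfunction is again one, and simplicity plus positivity force the eigenfunction to be even about $t=R$; hence it satisfies $\vp'(R)=0$ and restricts to a positive (hence first) eigenfunction of \eqref{eq 2.1} with $w \equiv 1$, the converse being exactly the even-reflection you mention as a consistency check. You instead locate an interior critical point $t_0$ of the eigenfunction, split $[0,2R]$ into two Robin--Neumann problems on $[0,t_0]$ and $[0,2R-t_0]$ sharing the eigenvalue $\mu_p([0,2R],\a)$, and use strict monotonicity of $s \mapsto \bar\l_p([0,s],1,\a)$ to force $t_0=R$. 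Your route avoids invoking simplicity of $\mu_p$ (the hidden ingredient in the paper's "cannot be odd" step), at the price of needing the monotonicity lemma for \emph{both} signs of $\a$; the steps on endpoint derivative signs, restriction/reflection, and "positive eigenfunction implies first eigenfunction" are all correct and are facts the paper itself relies on.

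There is, however, a genuine gap in your treatment of $\a<0$: the strict monotonicity there does \emph{not} follow "from the same prolongation-by-a-constant test-function argument." For $\a<0$ the eigenvalue is negative, so prolonging the first eigenfunction $u$ of $[0,\bar R]$ by a constant keeps the numerator equal to $\bar\l_p([0,\bar R],1,\a)<0$ while strictly increasing the denominator $\int |u|^p\,dt$; the Rayleigh quotient of the prolonged function is therefore \emph{larger} than $\bar\l_p([0,\bar R],1,\a)$, and the resulting inequality $\bar\l_p([0,R],1,\a) \le (\text{something bigger than } \bar\l_p([0,\bar R],1,\a))$ is vacuous. The correct elementary argument goes in the opposite direction: restrict the first eigenfunction $u>0$ of the \emph{longer} interval $[0,R]$ to $[0,\bar R]$ and use it as a test function. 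Writing $\l = \bar\l_p([0,R],1,\a) < 0$,
\begin{equation*}
\int_0^{\bar R}|u'|^p\,dt + \a|u(0)|^p
= \l\int_0^{\bar R}|u|^p\,dt + \l\int_{\bar R}^{R}|u|^p\,dt - \int_{\bar R}^{R}|u'|^p\,dt
< \l\int_0^{\bar R}|u|^p\,dt,
\end{equation*}
since $\l\int_{\bar R}^{R}|u|^p\,dt < 0$ (here $u>0$) and $-\int_{\bar R}^{R}|u'|^p\,dt \le 0$; dividing by $\int_0^{\bar R}|u|^p\,dt>0$ gives $\bar\l_p([0,\bar R],1,\a) < \bar\l_p([0,R],1,\a)$, i.e.\ the strictly increasing monotonicity you need for $\a<0$. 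With this substitution (and keeping Proposition \ref{prop 2.1}(3) for $\a>0$), your proof is complete.
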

\begin{proof}
Observe that \eqref{eq 2.4} is invariant under the symmetry $t \to 2R-t$. It then follows that, if we fix a positive first eigenfunction $v$ of \eqref{eq 2.4}, then $v$ must be even at $t=R$ ($v$ cannot be odd at $t=R$ since $v$ is positive). Hence $v'(R)=0$ and $v$ is also eigenfunction of \eqref{eq 2.1} (with $w \equiv 1$). It has to be the first eigenfunction since $v$ is positive. 

Conversely, the first eigenfunction $u$ of \eqref{eq 2.1} can be extended to a function $\bar{u}$ on $[0,2R]$ by
\begin{equation*}
    \bar{u} (t) = 
    \begin{cases} 
    u(t), & \text { for } 1\leq t \leq R, \\
    u(2R -t), & \text{ for } R \leq t \leq 2R. 
    \end{cases}
\end{equation*}
It's easy to see that $\bar{u}$ is the first eigenfunction of \eqref{eq 2.4}.

\end{proof}

\section{An Extension of Barta's Inequality}
Barta's inequality (see for example \cite[Lemma 1 on page 70]{Chavel84}) was frequently used in obtaining lower and upper for the first Dirichlet eigenvalue of the Laplacian. 
It asserts that for any function $v\in C^2(M) \cap C(\overline{M})$ satisfying $v>0$ in $M$ and $v=0$ on $\p M$, 
\begin{equation*}
   \inf_M \frac{-\Delta v}{v} \leq  \l_1^D (M)  \leq \sup_M \frac{-\Delta v}{v},
\end{equation*}
where $\l_1^D(M)$ is the first Dirichlet eigenvalue of the Laplacian on $M$. 
Kasue \cite[Lemma 1.1]{Kasue84} extended Barta's result and proved that if there is a positive continuous function $v$ on $M$ satisfying $-\Delta v  \geq \l v $ in the distributional sense for some constant $\l$, then $\l_1^D(M) \geq \l$.
Moreover, if $v$ is smooth on an open dense subset of $M$ and the equality is achieved, then $v$ is the first eigenfunction satisfying the Dirichlet boundary condition.
We extend these results to the $p$-Laplacian with Robin boundary condition. 

\begin{thm}\label{Thm Barta}
Let $v\in C^1(\overline{M})$ (it suffices to assume $v$ is Lipschitz on $\overline{M}$) be a positive function. 
\begin{enumerate}
    \item Suppose that $v$ satisfies 
    \begin{equation*}
    \begin{cases}
    -\Delta_{p} v  \geq \l |v|^{p-2} v, & \text{ in } M, \\
    \frac{\p v}{\p \nu} |\nabla v|^{p-2} +\a |v|^{p-2}v \geq 0, & \text{ on } \p M,  
    \end{cases}
\end{equation*}
in the distributional sense, then we have 
\begin{equation*}
    \l_{p}(M, \a) \geq \l.
\end{equation*}
Moreover, the equality holds if and only if $v$ is a constant multiple of the first eigenfunction of $\l_p(M,\a)$. 
\item Suppose that $v$ satisfies 
\begin{equation*}
    \begin{cases}
    -\Delta_{p} v  \leq \l |v|^{p-2} v, & \text{ in } M, \\
    \frac{\p v}{\p \nu} |\nabla v|^{p-2} +\a |v|^{p-2}v \leq 0, & \text{ on } \p M,  
    \end{cases}
\end{equation*}
in the distributional sense, then we have 
\begin{equation*}
    \l_{p}(M, \a) \leq \l.
\end{equation*}
\end{enumerate}
\end{thm}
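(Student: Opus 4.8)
The plan is to prove part (1) by combining the variational characterization \eqref{eq 1.2} with the Picone identity for the $p$-Laplacian established in \cite{AH98}, and to prove part (2) by a direct test-function computation. Before anything else I fix the meaning of the hypotheses: I read ``$v$ satisfies the system in the distributional sense'' as the single combined weak inequality
\[
\int_M |\nabla v|^{p-2}\langle\nabla v,\nabla\psi\rangle\,d\mu_g + \alpha\int_{\partial M}|v|^{p-2}v\,\psi\,dA \;\geq\; \lambda\int_M |v|^{p-2}v\,\psi\,d\mu_g
\]
for every nonnegative $\psi\in W^{1,p}(M)$ (with the reversed inequality in part (2)). Integrating by parts formally separates this into the interior inequality $-\Delta_p v\geq\lambda|v|^{p-2}v$ and the boundary inequality, so it is the natural weak formulation of a Robin super-/subsolution.

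For part (1), let $u>0$ be the first eigenfunction for $\lambda_p(M,\alpha)$, normalized by $\int_M u^p\,d\mu_g=1$; by L\^{e} \cite{Le06} we have $u\in C^{1,\gamma}(\overline{M})$, and since $v$ is continuous and positive on the compact set $\overline{M}$ it satisfies $v\geq c>0$, so that $\phi:=u^p/v^{p-1}$ is a legitimate nonnegative element of $W^{1,p}(M)$. The Picone identity of \cite{AH98} gives the a.e.\ pointwise inequality $|\nabla u|^p \geq |\nabla v|^{p-2}\langle\nabla v,\nabla\phi\rangle$. Integrating this over $M$, and then inserting $\phi$ into the weak supersolution inequality while noting that $|v|^{p-2}v\,\phi=u^p$ both in $M$ and on $\partial M$, I obtain
\[
\int_M|\nabla u|^p\,d\mu_g \;\geq\; \int_M |\nabla v|^{p-2}\langle\nabla v,\nabla\phi\rangle\,d\mu_g \;\geq\; \lambda\int_M u^p\,d\mu_g - \alpha\int_{\partial M} u^p\,dA.
\]
Since $\int_M u^p=1$, rearranging yields $\lambda_p(M,\alpha)=\int_M|\nabla u|^p + \alpha\int_{\partial M}u^p \geq \lambda$, which is the desired lower bound.

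Part (2) is shorter. Because $v\in C^1(\overline{M})\subset W^{1,p}(M)$ is itself admissible and $v>0$, I simply take $\psi=v$ in the weak subsolution inequality, which reads $\int_M|\nabla v|^p + \alpha\int_{\partial M}|v|^p \leq \lambda\int_M|v|^p$. Dividing by $\int_M|v|^p$ shows the Rayleigh quotient of $v$ is at most $\lambda$; since \eqref{eq 1.2} makes every Rayleigh quotient an upper bound for $\lambda_p(M,\alpha)$, I conclude $\lambda_p(M,\alpha)\leq\lambda$.

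The equality statement in part (1) is where the genuine work lies, and I expect it to be the main obstacle. If $\lambda_p(M,\alpha)=\lambda$, then both inequalities in the displayed chain must be equalities, so in particular $\int_M\big(|\nabla u|^p - |\nabla v|^{p-2}\langle\nabla v,\nabla\phi\rangle\big)\,d\mu_g=0$. The integrand here is precisely the nonnegative Picone remainder, so it vanishes a.e.; the equality characterization in \cite{AH98} then forces $\nabla(u/v)=0$ a.e., and since $M$ is connected and $u/v$ is Lipschitz this makes $u/v$ constant, i.e.\ $v$ is a constant multiple of the first eigenfunction. The converse is immediate. The delicate points to secure are thus the deployment of Picone's identity under only $C^1$ (Lipschitz) regularity — verifying that a.e.\ differentiability suffices for the pointwise inequality, that $\phi$ genuinely belongs to $W^{1,p}(M)$ by virtue of the uniform lower bound $v\geq c>0$, and that the passage from ``the Picone remainder vanishes a.e.'' to ``$u$ is proportional to $v$'' holds on the connected manifold $M$.
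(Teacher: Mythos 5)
Your proposal is correct and follows essentially the same route as the paper: part (1) combines the weak supersolution inequality tested against $u^p/v^{p-1}$ with the Picone identity of \cite{AH98} (the paper tests with $\vp^p/v^{p-1}$ for smooth $\vp$ and then lets $\vp$ approach the first eigenfunction, whereas you insert $u$ directly after justifying its admissibility via $u\in C^{1,\gamma}(\overline{M})$ and $v\geq c>0$ --- a harmless variant), and part (2) and the equality analysis via the vanishing of the Picone remainder coincide with the paper's argument.
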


An immediate consequence of the rigidity in part (1) of Theorem \ref{Thm Barta} is the simpleness of $\l_p(M,\a)$. 
\begin{corollary}
$\l_p(M,\a)$ is simple. 
\end{corollary}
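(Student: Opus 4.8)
The plan is to read off the simplicity of $\l_p(M,\a)$ directly from the rigidity clause in part (1) of Theorem \ref{Thm Barta}, so that essentially no new computation is required. First I would fix, once and for all, a positive minimizer $u$ of the Rayleigh quotient in \eqref{eq 1.2}, normalized by $\int_M u^p\, d\mu_g = 1$. Such a $u$ exists by the direct method, is a weak solution of \eqref{eq 1.1} with $\l=\l_p(M,\a)$, lies in $C^1(\overline M)$ by \cite{Le06}, and (being a minimizer) has constant sign, so we may take $u>0$ on $\overline M$. It is this concrete $u$ that will play the role of ``the first eigenfunction'' in the rigidity statement.

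Next, let $\tilde u$ be an arbitrary eigenfunction associated with $\l_p(M,\a)$. I would first observe that $\tilde u$ is automatically a minimizer of \eqref{eq 1.2}: testing the weak form of \eqref{eq 1.1} against $\tilde u$ itself gives $\int_M |\nabla \tilde u|^p\, d\mu_g + \a\int_{\p M}|\tilde u|^p\, dA = \l_p(M,\a)\int_M |\tilde u|^p\, d\mu_g$, so its Rayleigh quotient equals the infimum $\l_p(M,\a)$. Hence $\tilde u$ has constant sign, and after replacing $\tilde u$ by $-\tilde u$ if necessary we may assume $\tilde u>0$ on $\overline M$.

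The key step is then to note that the positive eigenfunction $\tilde u$ satisfies the hypotheses of part (1) of Theorem \ref{Thm Barta} with $\l=\l_p(M,\a)$, with equality throughout. Indeed, $-\Delta_p \tilde u = \l_p(M,\a)|\tilde u|^{p-2}\tilde u$ in particular gives the differential inequality $-\Delta_p \tilde u \ge \l_p(M,\a)|\tilde u|^{p-2}\tilde u$, and the Robin condition $\frac{\p \tilde u}{\p\nu}|\nabla \tilde u|^{p-2} + \a|\tilde u|^{p-2}\tilde u = 0$ supplies the required boundary inequality. Applying part (1) yields $\l_p(M,\a)\ge \l_p(M,\a)$, so equality is attained; the rigidity clause then forces $\tilde u$ to be a constant multiple of the fixed first eigenfunction $u$. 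Since $\tilde u$ was an arbitrary eigenfunction of $\l_p(M,\a)$, every eigenfunction is proportional to $u$, which is exactly the assertion of simplicity.

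The one point I would flag for care — a potential pitfall rather than a genuine analytic obstacle — is circularity: the phrase ``the first eigenfunction'' in the rigidity clause of Theorem \ref{Thm Barta}(1) must be anchored to the concrete variational minimizer $u$, not to a presupposed-unique eigenfunction. Because the equality case of the Picone identity underlying Theorem \ref{Thm Barta} compares the given supersolution against precisely this fixed $u$, the conclusion $\tilde u = c\,u$ is legitimate and the argument does not tacitly assume what it sets out to prove. Everything else is immediate.
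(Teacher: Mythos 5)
Your proof is correct and takes essentially the same route as the paper, which presents the corollary as an immediate consequence of the rigidity clause in part (1) of Theorem \ref{Thm Barta}: you simply spell out that argument, fixing a positive variational minimizer $u$, showing any eigenfunction $\tilde u$ for $\l_p(M,\a)$ is (up to sign) a positive admissible competitor, and invoking the equality case to get $\tilde u = c\,u$. Your flag about anchoring ``the first eigenfunction'' to the concrete minimizer, rather than a presupposed-unique one, is exactly the right point of care and matches how the rigidity is actually proved via Picone's identity.
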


To prove Theorem \ref{Thm Barta}, we make use of the Picone's identity for $\Delta_{p}$ proved in \cite{AH98}. For reader's convenience, we include its short proof here as well. 
\begin{prop}[Picone's identity]\label{prop Picone}
Let $u\geq 0$ and $v>0$ be differentiable functions on $M$. Let  
\begin{align*}
    L(u,v) &=|\nabla u|^p +(p-1)\frac{u^p}{v^p}|\nabla v|^p -p\frac{u^{p-1}}{v^{p-1}} |\nabla v|^{p-2} \langle \nabla u, \nabla v \rangle, \\
    R(u,v) &=|\nabla u|^p - |\nabla v|^{p-2} \left\langle \nabla \left(\frac{u^p}{v^{p-1}} \right), \nabla v \right\rangle.
\end{align*}
Then $$L(u,v)=R(u,v) \geq 0.$$ Moreover, $L(u,v) =0$ a.e. in $M$ if and only if $u=c v$ for some constant $c$.
\end{prop}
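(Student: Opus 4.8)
The plan is to prove Picone's identity, which is the final statement in the excerpt—the Proposition labeled \texttt{prop Picone}. The strategy is entirely algebraic and pointwise: I would verify the claimed equality $L(u,v)=R(u,v)$ by direct expansion, then establish nonnegativity $L(u,v)\geq 0$ via a Cauchy--Schwarz/Young-type argument, and finally analyze the equality case.

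First I would expand $R(u,v)$ by carrying out the differentiation $\nabla\bigl(u^p/v^{p-1}\bigr)$ using the quotient rule. Writing
\begin{equation*}
\nabla\lf(\frac{u^p}{v^{p-1}}\ri)=\frac{p\,u^{p-1}}{v^{p-1}}\nabla u-\frac{(p-1)u^p}{v^p}\nabla v,
\end{equation*}
I would substitute this into the definition of $R(u,v)$. Taking the inner product against $|\nabla v|^{p-2}\nabla v$ and using $|\nabla v|^{p-2}\langle \nabla v,\nabla v\rangle=|\nabla v|^p$, the two resulting terms reproduce exactly the last two terms of $L(u,v)$. This verifies $L(u,v)=R(u,v)$ by inspection; it is purely mechanical and presents no obstacle.

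The substantive step is showing $L(u,v)\geq 0$. Here I would reorganize $L(u,v)$ as a nonnegative combination. Setting $s=u/v$ and thinking of the vectors $X=\nabla u$ and $Y=(u/v)\nabla v$, one has $L(u,v)=|X|^p+(p-1)|Y|^p-p|\nabla v|^{p-2}\langle X,\nabla v\rangle\,(u/v)$. The cleanest route is to invoke the pointwise inequality, valid for vectors $a,b$ in an inner product space and $p>1$,
\begin{equation*}
|a|^p+(p-1)|b|^p-p\,|b|^{p-2}\langle a,b\rangle\geq 0,
\end{equation*}
which follows from Young's inequality applied to $p|b|^{p-2}\langle a,b\rangle\leq p|a|\,|b|^{p-1}\leq |a|^p+(p-1)|b|^p$, with the first step being Cauchy--Schwarz. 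Applying this with $a=\nabla u$ and $b=(u/v)\nabla v$ gives $L(u,v)\geq 0$ immediately, since $|b|^{p-2}\langle a,b\rangle=(u/v)^{p-1}|\nabla v|^{p-2}\langle\nabla u,\nabla v\rangle$ matches the cross term. I expect this reduction to the elementary vector inequality to be the crux, and the main thing to get right is tracking the factors of $u/v$ so that the cross terms align.

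Finally, for the equality case I would trace back through the two inequalities used. Equality in Young's inequality forces $|\nabla u|=(u/v)|\nabla v|$, and equality in Cauchy--Schwarz forces $\nabla u$ and $\nabla v$ to be positively parallel; together these give $\nabla u=(u/v)\nabla v$ wherever the gradients are nonzero, which is equivalent to $\nabla(u/v)=0$. Thus $L(u,v)=0$ almost everywhere is equivalent to $u/v$ being locally constant, i.e. $u=cv$; on a connected $M$ the constant is global. I would note the only subtlety is handling the set where $\nabla v=0$, but there the integrand and the claimed relation $\nabla u=(u/v)\nabla v$ degenerate consistently, so the characterization $u=cv$ a.e. stands.
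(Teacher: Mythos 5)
Your proposal is correct and follows essentially the same route as the paper's proof: expand $\nabla\left(u^p/v^{p-1}\right)$ to verify $L=R$, then combine Cauchy--Schwarz with Young's inequality (which the paper writes in the scalar form $ab\le \frac{a^p}{p}+\frac{p-1}{p}b^{\frac{p}{p-1}}$, equivalent to your vector formulation) to get $L\ge 0$, and trace equality back to $\nabla(u/v)=0$ a.e., hence $u=cv$. Your treatment of the equality case and of the set where $\nabla v=0$ is in fact slightly more careful than the paper's one-line conclusion.
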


\begin{proof}
Direct calculation gives
\begin{eqnarray*}
 R(u,v) &=&|\nabla u|^p - |\nabla v|^{p-2} \left\langle \nabla \left(\frac{u^p}{v^{p-1}} \right), \nabla v \right\rangle\\
 &=&|\nabla u|^p +(p-1)\frac{u^p}{v^p}|\nabla v|^p -p\frac{u^{p-1}}{v^{p-1}} |\nabla v|^{p-2} \langle \nabla u, \nabla v \rangle\\
 &=&L(u,v).
\end{eqnarray*}
Applying H\"older's inequality $ab\le\frac{a^p}{p}+\frac{p-1}{p}b^{\frac{p}{p-1}}$ with $a=|\nabla u|$ and $b=\frac{u^{p-1}}{v^{p-1}}|\nabla v|^{p-1}$, we have
\begin{eqnarray*}
\frac{u^{p-1}}{v^{p-1}} |\nabla v|^{p-2} |\langle \nabla u, \nabla v \rangle|&\le&\frac{u^{p-1}}{v^{p-1}}|\nabla v|^{p-1}|\nabla u|\\
&\le&\frac{|\nabla u|^p}{p}+\frac{p-1}{p}\frac{u^p}{v^p}|\nabla v|^p,
\end{eqnarray*}
proving that $L(u,v)\ge 0$. 
If the equality occurs, then we easlily conclude that $\nabla \left(\frac{u}{v} \right) =0$ a.e. on $M$ and consequently $u= c\, v$ for some constant $c$. 
\end{proof}

\begin{proof}[Proof of Theorem \ref{Thm Barta}]
(1). By assumption, we have  
\begin{equation*}
    \int_M |\nabla v|^{p-2} \langle \nabla v, \nabla \eta \rangle \, d\mu_g \geq \l \int_M v^{p-1} \eta \, d\mu_g -\a \int_{\p M} v^{p-1} \eta \, dA,  
\end{equation*}
for any nonnegative function $\eta \in C^1(\overline{M})$. Choosing $\eta =\frac{\vp^p}{v^{p-1}}$ for any smooth function $\vp$ gives 
\begin{eqnarray*}
 \int_M |\nabla v|^{p-2} \left\langle \nabla v, \nabla \left(\frac{\vp^p}{v^{p-1}}\right) \right\rangle \, d\mu_g \geq \l \int_M \vp^p \, d\mu_g -\a \int_{\p M} \vp^p \, dA,  
\end{eqnarray*}
On the other hand, Picone's identity in Proposition \ref{prop Picone} implies 
\begin{equation*}
    \int_M |\nabla v|^{p-2} \left\langle \nabla v, \nabla \left(\frac{\vp^p}{v^{p-1}}\right) \right\rangle \, d\mu_g \leq \int_M |\nabla \vp |^p \, d\mu_g .
\end{equation*}
Combing the above two inequalities together yields 
\begin{equation*}
    \int_M |\nabla \vp |^p \,  d\mu_g  + \a \int_{\p M} \vp^p \, dA \geq  \l \int_M \vp^p \, d\mu_g .
\end{equation*}
The desired inequality $\l_p(M,\a) \geq \l$ follows by letting $\vp$ approach the first eigenfunction $u \in W^{1,p}(M)$. 
The equality occurs only if $\int_M L(\vp,v)  d\mu_g =0 $, which implies $L(u,v)=0$ a.e. on $M$, and then the equality case in Proposition \ref{prop Picone} implies $u=c\, v$ for some constant $c$. \\
(2). By assumption, we have that for any nonnegative function
$\eta  \in C^1(\overline{M})$,
\begin{equation*}
    \int_M |\nabla v|^{p-2} \langle \nabla v, \nabla \eta \rangle \, d\mu_g \le \l \int_M v^{p-1} \eta \, d\mu_g -\a \int_{\p M} v^{p-1} \eta \, dA.
\end{equation*}
Letting $\eta$ approach $v$ yields
\begin{equation*}
   \int_M |\nabla v|^p \, d\mu_g \le \l \int_M v^{p}  \, d\mu_g -\a \int_{\p M} v^{p}  \, dA,
\end{equation*}
which implies $\l_p(M,\a)  \leq \l$. 
\end{proof}

\section{Proof of Theorem \ref{Thm A}}
In this section, we prove Cheng's eigenvalue comparison theorems for $\l_p(M,\a)$. 
By Barta's inequality, we need to construct sub and supersolution for the eigenvalue equation. 
A natural function on $M$ is the distance function from a given point. Let $p\in M$ and $r(x)=d(x,p)$ be the geodesic distance from $p$ to $x$. Then $r(x)$ is a Lipschitz continuous function on $M$ and it is smooth on $M\setminus\{p, \text{Cut}(p)\}$, where $\text{Cut}(p)$ denotes the cut locus of $p$. 

Let $sn_\kappa$ be the unique solution of $sn_\kappa''+\kappa sn_\kappa=0 $ with $sn_\kappa (0)=0$ and $sn_\kappa'(0)=1$, i.e.,  $sn_\kappa$ are the coefficients of the Jacobi fields of the model spaces $M^n(\kappa)$ given by 
\begin{equation*}
    sn_\kappa(t) = 
    \begin{cases}
    \frac{1}{\sqrt{\kappa}} \sin(\sqrt{\kappa} t) & \text{ if } \kappa >0,\\
    t & \text{ if } \kappa=0,\\
    \frac{1}{\sqrt{-\kappa}}\sinh{(\sqrt{-\kappa}t)} & \text{ if } \kappa<0.
    \end{cases}
\end{equation*}

We need the Laplace comparison theorem for the distance function, see for example \cite{SYbook} and \cite{Sakai96}. 
\begin{thm}\label{thm comparison}
Let $(M^n,g)$ be a complete Riemannian manifold of dimension $n$ and $p\in M$. Let $r(x)=d(x,p)$.  
\begin{enumerate}
    \item Suppose that $\Ric \geq (n-1)\kappa$ on $M$. Then 
    \begin{equation*}
        \Delta r(x) \leq (n-1)\frac{sn_\kappa'(r)}{sn_\kappa(r)}
    \end{equation*}
    holds for all $x \in M\setminus\{p, \emph{\text{Cut}}(p) \}$, and also holds globally on $M$ in the sense of distribution. 
 
    \item Suppose that $\operatorname{Sect} \leq \kappa$ on M.  Then 
    \begin{equation*}
        \Delta r(x) \geq (n-1)\frac{sn_\kappa'(r)}{sn_\kappa(r)}
    \end{equation*}
   holds on the set $\{x \in M: r(x) \leq \min\{\emph{\text{inj}}(p), \frac{\pi}{2\sqrt{\kappa}}\}\}$, 
   where $\emph{\text{inj}}(p)$ denotes the injectivity radius at $p$, and we understand $\frac{\pi}{2\sqrt{\kappa}} =\infty$ if $\kappa \leq 0$. 
\end{enumerate}
\end{thm}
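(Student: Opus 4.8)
The plan is to reduce both statements to ODE comparison results for the shape operator of the distance function along a minimizing geodesic. Fix $x \in M \setminus (\{p\} \cup \operatorname{Cut}(p))$ and let $\gamma\colon [0,r] \to M$ be the unique unit-speed minimizing geodesic from $p$ to $x$, where $r = r(x)$. On the orthogonal complement $(\gamma')^\perp$ I consider the symmetric shape operator $S(t) = \H r|_{\gamma(t)}$ (the second fundamental form of the geodesic spheres), which satisfies the matrix Riccati equation $S' + S^2 + R_\gamma = 0$, where $R_\gamma = R(\cdot, \gamma')\gamma'$ is the curvature endomorphism. Near $t = 0$ one has the matched asymptotics $S(t) = \tfrac{1}{t}\,\mathrm{Id} + O(t)$, and the model comparison operator $\bar S(t) = \tfrac{sn_\kappa'(t)}{sn_\kappa(t)}\,\mathrm{Id}$ solves the same Riccati equation with $R_\gamma$ replaced by $\kappa\,\mathrm{Id}$, with the same leading singularity. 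These two facts are the backbone of the whole argument.

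For part (1), I would take the trace of the Riccati equation. Writing $m(t) = \operatorname{tr} S(t) = \Delta r(\gamma(t))$, the trace gives $m' + \operatorname{tr}(S^2) + \Ric(\gamma',\gamma') = 0$; equivalently this is the Bochner formula applied to $r$, using $|\nabla r| \equiv 1$. Cauchy--Schwarz on the $(n-1)$-dimensional space $(\gamma')^\perp$ yields $\operatorname{tr}(S^2) \ge \tfrac{(\operatorname{tr} S)^2}{n-1} = \tfrac{m^2}{n-1}$, so the hypothesis $\Ric \ge (n-1)\kappa$ produces the scalar Riccati differential inequality $m' + \tfrac{m^2}{n-1} \le -(n-1)\kappa$. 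The model function $\bar m(t) = (n-1)\tfrac{sn_\kappa'(t)}{sn_\kappa(t)}$ satisfies the corresponding equality, and both $m$ and $\bar m$ are asymptotic to $\tfrac{n-1}{t}$ as $t \to 0^+$. A standard comparison for the Riccati inequality with matched poles at $t = 0$ then gives $m(t) \le \bar m(t)$ for all $t \in (0,r]$; evaluating at $t = r$ is the pointwise claim.

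The extension to the global distributional inequality is where the only real subtlety lies, since $r$ fails to be smooth on $\operatorname{Cut}(p)$. I would handle this with Calabi's barrier trick: given $x \in \operatorname{Cut}(p)$ and a minimizing geodesic $\gamma$ from $p$, the shifted function $r_\eps(y) = \eps + d(y, \gamma(\eps))$ is smooth near $x$, satisfies $r_\eps \ge r$ with equality at $x$ (by the triangle inequality, since $d(x,\gamma(\eps)) = r - \eps$), and obeys the smooth bound of part (1) up to an $O(\eps)$ error because $x$ is not in the cut locus of the interior point $\gamma(\eps)$. Thus $r$ admits smooth upper support functions with $\Delta r_\eps(x) \le (n-1)\tfrac{sn_\kappa'(r)}{sn_\kappa(r)} + o(1)$ at every point, and the standard fact that a function with such pointwise upper barriers is a distributional supersolution (equivalently, the singular part of $\Delta r$ as a measure is nonpositive) upgrades the smooth inequality to the distributional one.

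For part (2), the hypotheses $r(x) \le \min\{\operatorname{inj}(p), \tfrac{\pi}{2\sqrt{\kappa}}\}$ guarantee that $\gamma$ is the unique minimizing geodesic, that $r$ is smooth near $x$, and that there are no conjugate points along $\gamma$, so no distributional issues arise. Here I would use the full matrix Riccati equation rather than its trace. The sectional curvature bound $\operatorname{Sect} \le \kappa$ translates into the operator inequality $R_\gamma \le \kappa\,\mathrm{Id}$ on $(\gamma')^\perp$, which makes $S$ a supersolution of the model equation $X' + X^2 + \kappa\,\mathrm{Id} = 0$; matrix Riccati comparison with the matched singular initial condition then yields $S(t) \ge \tfrac{sn_\kappa'(t)}{sn_\kappa(t)}\,\mathrm{Id}$, i.e. $\H r \ge \tfrac{sn_\kappa'(r)}{sn_\kappa(r)}\,(g - dr \otimes dr)$. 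Taking the trace gives $\Delta r \ge (n-1)\tfrac{sn_\kappa'(r)}{sn_\kappa(r)}$, as desired. Equivalently, one may run the classical index-form argument, comparing $\H r(v,v) = I_0^r(J,J)$ for the Jacobi field $J$ with $J(0)=0$, $J(r)=v$ against the value of the index form on the explicit test field $W(t) = \tfrac{sn_\kappa(t)}{sn_\kappa(r)} E(t)$ with $E$ parallel. The main obstacle throughout is not the smooth comparison, which is routine, but the justification of the Riccati comparison across the matched singularity at the origin and the distributional upgrade past the cut locus in part (1).
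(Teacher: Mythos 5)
Your argument is correct, but it is worth pointing out that the paper does not prove Theorem \ref{thm comparison} at all: it is stated as a known result, with references to Schoen--Yau \cite{SYbook} and Sakai \cite{Sakai96}, and is then used as a black box in the proofs of Theorems \ref{Thm A} and \ref{Thm C}. So what you have written is the standard textbook proof that the paper omits, and it matches the cited references: tracing the Riccati equation (equivalently, Bochner applied to $r$ with $|\nabla r|\equiv 1$) plus Cauchy--Schwarz and scalar Riccati comparison with matched pole $\frac{n-1}{t}$ for part (1); Calabi's upper support functions $r_\eps(y)=\eps+d(y,\gamma(\eps))$ to pass the inequality across $\operatorname{Cut}(p)$ distributionally (your implicit use of the fact that $x\notin\operatorname{Cut}(\gamma(\eps))$ for $\eps>0$ is the standard first-variation/conjugate-point argument and is fine); and the full matrix Riccati comparison for part (2), where the key point you correctly exploit is that the model operator $\bar S=\frac{sn_\kappa'}{sn_\kappa}\,\mathrm{Id}$ is scalar, so $S^2-\bar S^2=(S+\bar S)(S-\bar S)$ despite non-commutativity and the positivity-preserving linear ODE for $U=S-\bar S$ closes the argument; this in fact yields the conclusion up to $\pi/\sqrt{\kappa}$, stronger than the range $\pi/(2\sqrt{\kappa})$ asserted in the statement. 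One caveat: your ``equivalently'' index-form aside for part (2) is phrased in the wrong direction. The index lemma gives $I(J,J)\le I(W,W)$ for the test field $W(t)=\frac{sn_\kappa(t)}{sn_\kappa(r)}E(t)$, which bounds $\operatorname{Hess}\, r$ from \emph{above} (that is the classical route for part (1), after tracing over parallel fields); to get the lower bound of part (2) one must insert the curvature hypothesis first and use the minimizing property of the model Jacobi field in the \emph{model} index form, i.e.\ $\operatorname{Hess}\, r(v,v)=I(J,J)\ge I_\kappa(J,J)\ge I_\kappa(W,W)=\frac{sn_\kappa'(r)}{sn_\kappa(r)}|v|^2$, valid since $r<\pi/\sqrt{\kappa}$ rules out model conjugate points. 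Since your primary matrix-Riccati argument for part (2) is complete, this misstatement in the aside does not affect the validity of the proof.
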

\begin{proof}[Proof of Theorem \ref{Thm A}]
(1). 
We first deal with the $\a>0$ case. 
Let $u$ be the first positive eigenfunction associated with $\l_p(V(\kappa, R), \a)$, which is a radial function, given by $u(x)=\vp(r(x))$, where $r(x)=d(x,x_0)$ and $\vp$ satisfies 
\begin{equation*}
    \begin{cases}
    (p-1)|\vp'|^{p-2} \vp'' +(n-1)\frac{sn'_\kappa}{sn_\kappa} |\vp'|^{p-2}\vp' =-\l_p(V(\kappa,R),\a) |\vp|^{p-2}\vp, \\
     |\vp'(R)|^{p-2}\vp'(R)=-\a |\vp(R)|^{p-2}\vp(R), \\
    \vp'(0)=0.
    \end{cases}
\end{equation*}
Applying Proposition \ref{prop 2.1} with $w=sn_\kappa^{n-1}$ and $u(t)=\vp(R-t)$, we have that 
$\vp'(t)<0$. 
Consider the function $v(x)$ defined on $B_R(x_0)$ by
$$
v(x)=\vp(r(x)).
$$
Since $\Ric \ge (n-1)\kappa$, we have $\Delta r(x) \le (n-1)\frac{sn_\kappa'(r)}{sn_\kappa(r)}$ for all $x \in M\setminus\{x_0, C(x_0) \}$ by part (1) of Theorem \ref{thm comparison}. 
Direct calculation gives  
\begin{eqnarray*}
-\Delta_p v (x)&=&-(p-1)|\vp'|^{p-2}\vp''-|\vp'|^{p-2}\vp'\Delta r(x)\\
&\le&-(p-1)|\vp'|^{p-2}\vp''-|\vp'|^{p-2}\vp'\frac{(n-1)sn'_\kappa}{sn_\kappa}\\
&=&\l_p(V(\kappa, R),\a) |\vp|^{p-2}\vp\\
&=&\l_p(V(\kappa, R),\a) |v|^{p-2}v
\end{eqnarray*}
for all $x \in M\setminus\{x_0, C(x_0) \}$.
On the other hand, direct calculation shows that $v(x)$ satisfies $$|\nabla v|^{p-2} \frac{\p v}{\p \nu} +\a |v|^{p-2} v =0$$ on $\p B_R(x_0)$.
Since the cut locus is a null set, standard argument via approximation shows that $v(x)$ satisfies 
\begin{equation*}
    \begin{cases}
    -\Delta_{p} v  \leq \l_p(V(\kappa, R),\a) |v|^{p-2} v, & \text{ in } B_{R}(x_0), \\
    \frac{\p v}{\p \nu} |\nabla v|^{p-2} +\a |v|^{p-2}v = 0, & \text{ on } \p  B_{R}(x_0),  
    \end{cases}
\end{equation*}
in the distributional sense. 
It then follows from part (2) of Theorem \ref{Thm Barta} that 
\begin{equation*}
     \l_p(B_{R}(x_0), \a)\le \l_p(V(\kappa, R), \a).
\end{equation*}

If $\a <0$, we have that 
$\vp'(t)>0$. Same argument as in the $\a>0$ case shows that 
 $v(x)$ satisfies 
\begin{equation*}
    \begin{cases}
    -\Delta_{p} v  \geq \l_p(V(\kappa, R),\a) |v|^{p-2} v, & \text{ in } B_R(x_0), \\
    \frac{\p v}{\p \nu} |\nabla v|^{p-2} +\a |v|^{p-2}v = 0, & \text{ on } \p B_R(x_0),  
    \end{cases}
\end{equation*}
in the distributional sense. 
The desires estimate $\l_p(B_R(x_0), \a)  \ge \l_p(V(\kappa, R), \a) $ follows from part (1) of Theorem \ref{Thm Barta}. 

(2). If $\a>0$, then we have from Proposition \ref{prop 2.1} that
$$\vp'(t)<0 \text{ and } \frac{\vp'(t)}{\vp(t)}\ge -\a^{\frac{1}{p-1}}$$
for $t\in(0,R]$. 
Since $\operatorname{Sect}\le \kappa$, we have $\Delta r(x) \ge (n-1)\frac{sn_\kappa'(r)}{sn_\kappa(r)}$ by part (2) of Theorem \ref{thm comparison}. 
Firstly, same argument as in the proof of (1) shows that 
$$-\Delta_p v \geq \l_p(V(\kappa, R),\a) |v|^{p-2}v $$ on $\Omega$.
Secondly, using $\frac{\p v}{\p \nu_{\Omega}} =\vp' \langle \nabla r, \nu_\Omega \rangle \ge \vp'$ on $\p \Omega$, we estimate that
\begin{equation*}
    |\nabla v|^{p-2} \frac{\p v}{\p \nu_{\Omega}} +\a |v|^{p-2}v \ge |\vp'|^{p-2} \vp' +\a |\vp|^{p-2}\vp \ge 0
\end{equation*}
on $\p \Omega$, where $\nu_\Omega$ denote the unit outward normal vector field along $\p \Omega$.
Thus we conclude
\begin{equation*}
    \begin{cases}
    -\Delta_{p} v  \ge \l_p(V(\kappa, R),\a) |v|^{p-2} v, & \text{ in } \Omega, \\
    \frac{\p v}{\p \nu} |\nabla v|^{p-2} +\a |v|^{p-2}v \ge 0, & \text{ on } \p \Omega,  
    \end{cases}
\end{equation*}
holds in the distributional sense. 
The desires estimate $\l_p(\Omega, \a)  \ge \l_p(V(\kappa, R), \a) $ follows from part (1) of Theorem \ref{Thm Barta}. 

\end{proof}

\section{Proof of Theorem \ref{Thm C}}

By Barta's inequality in Theorem \ref{Thm Barta}, we need to find sub and supersolution to the eigenvalue equation for $\Delta_p$ with Robin boundary condition, in order to establish lower and upper bounds for $\l_p(M,\a)$. 
The natural choice here is the distance function to the boundary $d(x, \p M)$.
It is well known that the function $d(x, \p M)$ is Lipschitz on $M$ and smooth on $M\setminus \text{Cut}(\p M)$, where $\text{Cut}(\p M)$ denotes the cut locus of $\p M$ and it is a null set. 
We recall the following Laplace comparison theorem for $d(x, \p M)$ (see for instance \cite{Kasue82}). 
\begin{thm}\label{thm laplace comparison}
Let $(M^n,g)$ be a compact Riemannian manifold with boundary $\p M \neq \emptyset$. 
Suppose that the Ricci curvature of $M$ is bounded from below by $(n-1)\kappa$ and the mean curvature of $\p M$ is bounded from below by $(n-1)\Lambda$ for some $\kappa, \Lambda \in \R$. Then 
\begin{equation*}
    \Delta d(x, \p M) \leq  (n-1)T_{\kappa, \Lambda}\left(d(x,\p M)\right),
\end{equation*}
on $M\setminus \emph{\text{Cut}}(\p M)$.
\end{thm}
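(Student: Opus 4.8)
The plan is to reduce the statement to a one-dimensional Riccati comparison along a minimizing geodesic issuing orthogonally from $\p M$. Write $\rho(x)=d(x,\p M)$. First I would fix $x\in M\setminus\text{Cut}(\p M)$ and let $\gamma:[0,r]\to M$ be the unique unit-speed minimizing geodesic from $\p M$ to $x$, where $r=\rho(x)$; it meets $\p M$ orthogonally at $\gamma(0)$, and on a neighborhood of $\gamma((0,r])$ the function $\rho$ is smooth with $\nabla\rho=\gamma'$ and $|\nabla\rho|\equiv 1$. Setting $h(t)=\Delta\rho(\gamma(t))$, the theorem becomes the scalar estimate $h(r)\le (n-1)T_{\kappa,\Lambda}(r)$.

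Next I would produce the evolution equation for $h$. Viewing $S(t)=\operatorname{Hess}\rho$ as a symmetric endomorphism of $\gamma'(t)^{\perp}$ (it annihilates $\gamma'$ since $|\nabla\rho|\equiv 1$), the radial Riccati equation $\nabla_{\gamma'}S=-S^2-R(\cdot,\gamma')\gamma'$ gives, upon tracing,
\[
h'(t)+\operatorname{tr}\!\big(S(t)^2\big)+\Ric(\gamma',\gamma')=0 .
\]
The Cauchy--Schwarz inequality $\operatorname{tr}(S^2)\ge (\operatorname{tr}S)^2/(n-1)=h^2/(n-1)$ together with $\Ric\ge(n-1)\kappa$ then yields the differential inequality $h'\le -h^2/(n-1)-(n-1)\kappa$. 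For the initial value, at the foot point $\gamma(0)\in\p M$ one has $\nabla\rho=-\nu$, so $\operatorname{Hess}\rho$ restricted to $T\p M$ is minus the second fundamental form of $\p M$ with respect to the outward normal $\nu$; tracing and invoking the mean curvature bound gives $h(0)=-H_{\p M}\le-(n-1)\Lambda=(n-1)T_{\kappa,\Lambda}(0)$.

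Finally I would run the comparison against the model. Differentiating $T_{\kappa,\Lambda}=C'_{\kappa,\Lambda}/C_{\kappa,\Lambda}$ and using $C''_{\kappa,\Lambda}+\kappa\,C_{\kappa,\Lambda}=0$ shows $T'_{\kappa,\Lambda}=-T_{\kappa,\Lambda}^2-\kappa$ with $T_{\kappa,\Lambda}(0)=-\Lambda$; moreover $C_{\kappa,\Lambda}>0$ on $[0,r]$, since a zero would force a focal point of $\p M$ along $\gamma$ at or before $x$, contradicting $x\notin\text{Cut}(\p M)$, so $T_{\kappa,\Lambda}$ is smooth on $[0,r]$. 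Putting $\psi=h/(n-1)$ I obtain $\psi'\le-\psi^2-\kappa$ with $\psi(0)\le T_{\kappa,\Lambda}(0)$, whence $w=\psi-T_{\kappa,\Lambda}$ satisfies
\[
w'+\big(\psi+T_{\kappa,\Lambda}\big)\,w\le 0,\qquad w(0)\le 0 .
\]
An integrating-factor (Gr\"onwall) argument forces $w\le 0$ on $[0,r]$, i.e. $\Delta\rho(x)=(n-1)\psi(r)\le(n-1)T_{\kappa,\Lambda}(r)$, which is the assertion on $M\setminus\text{Cut}(\p M)$.

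The main obstacle is the bookkeeping at the boundary: one must fix the sign convention so that the initial condition $h(0)\le(n-1)T_{\kappa,\Lambda}(0)$ is exactly what the mean curvature lower bound supplies, and justify that $\rho$ (hence $h$) is smooth up to $t=0$. The interior Riccati--Gr\"onwall step is then routine, the only remaining care being the non-vanishing of $C_{\kappa,\Lambda}$ on $[0,r]$, which is guaranteed by $x$ lying off $\text{Cut}(\p M)$.
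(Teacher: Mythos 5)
Your proposal is correct, but note that the paper does not actually prove this statement: it is quoted as a known result with a pointer to Kasue's work, and your Riccati-comparison argument is essentially the standard proof found in that literature, with all sign conventions matching the paper's normalization ($C_{\kappa,\Lambda}(0)=1$, $C'_{\kappa,\Lambda}(0)=-\Lambda$, so $h(0)=-H_{\p M}\le (n-1)T_{\kappa,\Lambda}(0)$ is exactly what the mean curvature hypothesis gives). One small streamlining: you do not need to invoke the focal-point comparison as a separate ingredient to rule out zeros of $C_{\kappa,\Lambda}$ on $[0,r]$. Since $x\notin\text{Cut}(\p M)$, the function $\psi=h/(n-1)$ is smooth and finite on all of $[0,r]$; if $C_{\kappa,\Lambda}$ had a first zero at $Z\le r$, the Gr\"onwall comparison valid on $[0,Z)$ would give $\psi\le T_{\kappa,\Lambda}\to-\infty$ as $t\to Z^-$, contradicting finiteness of $\psi$ at $Z$. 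This makes the argument self-contained, whereas the focal comparison you cite is itself proved by the very same Riccati estimate, so quoting it is slightly circular in spirit (though not in fact, since it is an independently established theorem).
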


We then construct sub and supersolution of the eigenvalue equation by composing $d(x, \p M)$ with the eigenfunction of the one-dimensional problem \eqref{eq 1.4}. 
\begin{prop}\label{prop eigenfunction}
Let $\bar\l_p :=\bar\l_p\left([0,R],\a \right)$ and $\vp$ be the first eigenvalue and eigenfunction of the one-dimensional problem \eqref{eq 1.4}.
Let $v(x)=\vp(d(x,\p M))$. 
\begin{enumerate}
    \item If $\a >0$, then $v$ satisfies 
\begin{equation*}
    \begin{cases}
    -\Delta_{p} v  \geq \bar\l_p |v|^{p-2} v, & \text{ in } M, \\
    \frac{\p v}{\p \nu} |\nabla v|^{p-2} +\a |v|^{p-2}v \geq 0, & \text{ on } \p M,  
    \end{cases}
\end{equation*}
in the distributional sense. 
\item If $\a < 0$, then $v$ satisfies 
\begin{equation*}
    \begin{cases}
    -\Delta_{p} v  \leq \bar\l_p |v|^{p-2} v, & \text{ in } M, \\
    \frac{\p v}{\p \nu} |\nabla v|^{p-2} +\a |v|^{p-2}v \leq 0, & \text{ on } \p M,  
    \end{cases}
\end{equation*}
in the distributional sense. 
\end{enumerate}
\end{prop}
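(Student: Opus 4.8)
The plan is to apply Barta's inequality (Theorem \ref{Thm Barta}) to the radial function $v(x)=\vp(r(x))$, where $r(x)=d(x,\p M)$, so that the whole task reduces to verifying that $v$ is a super-solution (for $\a>0$) or sub-solution (for $\a<0$) of the eigenvalue equation in the distributional sense. The first thing I would do is fix the sign of $\vp'$. The point is that \eqref{eq 1.4} is exactly the model problem \eqref{eq 2.1} with weight $w=C_{\kappa,\Lambda}^{\,n-1}$: indeed $w(0)=1$ and $w'/w=(n-1)(\log C_{\kappa,\Lambda})'=(n-1)T_{\kappa,\Lambda}$, where one uses that $C_{\kappa,\Lambda}>0$ on $[0,R)$ (guaranteed by the geometric hypotheses). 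Proposition \ref{prop 2.1}(1)--(2) then gives $\vp'>0$ on $[0,R)$ when $\a>0$ and $\vp'<0$ on $[0,R)$ when $\a<0$; equivalently the factor $\psi(r):=|\vp'(r)|^{p-2}\vp'(r)$ has a fixed sign, nonnegative for $\a>0$ and nonpositive for $\a<0$.

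Next I would carry out the pointwise computation on the open dense set $M\setminus\text{Cut}(\p M)$, where $r$ is smooth and $|\nabla r|=1$. The chain rule gives $|\nabla v|^{p-2}\nabla v=\psi(r)\nabla r$ and hence
\begin{equation*}
\Delta_{p} v=(p-1)|\vp'|^{p-2}\vp''+\psi(r)\,\Delta r .
\end{equation*}
Eliminating the $\vp''$ term via the ODE in \eqref{eq 1.4} yields
\begin{equation*}
-\Delta_{p} v=\bar\l_p\,|v|^{p-2}v+\psi(r)\big[(n-1)T_{\kappa,\Lambda}(r)-\Delta r\big].
\end{equation*}
By the Laplace comparison theorem (Theorem \ref{thm laplace comparison}) the bracket is nonnegative, so the correction term inherits the sign of $\psi(r)$: for $\a>0$ we obtain $-\Delta_{p} v\ge\bar\l_p|v|^{p-2}v$ and for $\a<0$ the reverse, pointwise off the cut locus. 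For the boundary I would use $\nabla r=-\nu$ along $\p M$ to compute $|\nabla v|^{p-2}\frac{\p v}{\p\nu}=-|\vp'(0)|^{p-2}\vp'(0)$, and the Robin condition in \eqref{eq 1.4} shows this equals $-\a|\vp(0)|^{p-2}\vp(0)=-\a|v|^{p-2}v$; hence $|\nabla v|^{p-2}\frac{\p v}{\p\nu}+\a|v|^{p-2}v=0$ on $\p M$, so the boundary inequality holds with equality in both cases.

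The only delicate step is upgrading the pointwise interior inequality to the distributional statement demanded by Theorem \ref{Thm Barta}, namely (for $\a>0$, with $\le$ when $\a<0$)
\begin{equation*}
\int_M|\nabla v|^{p-2}\langle\nabla v,\nabla\eta\rangle\,d\mu_g\ \ge\ \bar\l_p\int_M v^{p-1}\eta\,d\mu_g-\a\int_{\p M}v^{p-1}\eta\,dA,\qquad 0\le\eta\in C^1(\overline{M}).
\end{equation*}
I would integrate by parts not on all of $M$ but on $U:=M\setminus\text{Cut}(\p M)$, whose boundary is $\p M$ together with an inner hypersurface limiting onto the cut locus. The $\p M$ part reproduces exactly the boundary term $-\a\int_{\p M}v^{p-1}\eta\,dA$ above, and the interior term $-\int_U\operatorname{div}(\psi(r)\nabla r)\,\eta\,d\mu_g$ is bounded below (resp.\ above) by $\bar\l_p\int_M v^{p-1}\eta\,d\mu_g$ by the pointwise identity. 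The key is the inner boundary term $\int\psi(r)\eta\,\langle\nabla r,\nu_U\rangle$: since $r$ is maximized along each normal geodesic at the cut distance, $\langle\nabla r,\nu_U\rangle\ge0$ there, so this contribution has the sign of $\psi$, namely $\ge0$ for $\a>0$ and $\le0$ for $\a<0$ — precisely the direction that preserves the inequality. Equivalently, the singular part of the distributional Laplacian of $r$ is a nonpositive measure on $\text{Cut}(\p M)$, and multiplying by the fixed-sign factor $\psi(r)$ retains the correct sign.

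I expect this cut-locus analysis to be the main obstacle: one must justify the divergence theorem on $U$ with its non-smooth inner boundary and confirm the sign of the cut-locus contribution. Since $\text{Cut}(\p M)$ is a closed null set, this is handled by the standard approximation/exhaustion argument already invoked in the proof of Theorem \ref{Thm A}, and it is exactly the fixed sign of $\psi(r)=|\vp'|^{p-2}\vp'$ supplied by Proposition \ref{prop 2.1} that makes the scheme close.
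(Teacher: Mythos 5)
Your proposal is correct and matches the paper's own proof in all essentials: the same pointwise computation combining the ODE in \eqref{eq 1.4} with the Laplace comparison theorem, the same observation that the Robin boundary condition holds with equality on $\p M$, and the same treatment of the cut locus, where the paper implements your ``standard exhaustion'' via Sakurai's lemma (the sets $\Omega_k$ with $\langle \nu_k, \nabla d(x,\p M)\rangle \geq 0$ on $\p\Omega_k \setminus \p M$), so that the inner boundary term inherits the sign of $|\vp'|^{p-2}\vp'$ exactly as you argue.
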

It's easy to see that $v(x)$ satisfies the Robin boundary condition $\frac{\p v}{\p \nu}|\nabla v|^{p-2} +\a |v|^{p-2}v =0$ on $\p M$ and the inequality $-\Delta_p v \geq \bar\l_p |v|^{p-2} v$ holds on $M\setminus \text{Cut}(M)$ if $\a >0$. To show the partial differential inequality holds in the sense of distribution, we need the following lemma in \cite[Lemma 2.5]{Sakurai19}, which is useful in avoiding the cut locus of $\p M$. 
\begin{lemma}
Let $(M,g)$ be a smooth Riemannian manifold with smooth boundary $\p M$. Then there exists a sequence $\{\Omega_k\}_{k=1}^\infty$ of closed subsets of $\overline{M}$ satisfying the following properties:
\begin{enumerate}
    \item for every $k$, the set $\p \Omega_k$ is a smooth hypersurface in $M$ and $\p \Omega_k \cap \p M =\p M$;
    \item $k_1 < k_2$ implies $\Omega_{k_1} \subset \Omega_{k_2}$;
    \item $\overline{M} \setminus \emph{\text{Cut}}(M) =\cup_{k=1}^\infty \Omega_k$; 
    \item for every $k$, on $\p \Omega_k \setminus \p M$, there exists the unit outward normal vector field $\nu_k$ for $\Omega_k$ satisfying $\langle \nu_k, \nabla d(x,\p M) \rangle \geq 0$.
\end{enumerate}
\end{lemma}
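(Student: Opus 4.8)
The plan is to build the $\Omega_k$ as sub-level regions of the distance-to-boundary function, truncated below the cut locus by smooth graphs over $\p M$. Let $N$ be the inward unit normal field along $\p M$ and let $E\colon \p M\times[0,\infty)\to \overline M$, $E(y,t)=\exp_y(tN(y))$, be the normal exponential map. For $y\in\p M$ let $\rho(y)\in(0,\infty]$ denote the cut distance of $\p M$ along $t\mapsto E(y,t)$, so that this geodesic realizes $d(\cdot,\p M)$ exactly for $t\in[0,\rho(y)]$. Two standard facts about the cut locus of a hypersurface are what I would invoke: (i) $\rho$ is continuous on $\p M$ with values in $(0,\infty]$ (continuity understood via $\arctan\rho$), the positivity $\rho>0$ coming from the existence of a collar of the smooth embedded boundary; and (ii) $E$ restricts to a diffeomorphism of $\{(y,t):0\le t<\rho(y)\}$ onto $\overline M\setminus \mathrm{Cut}(\p M)$, with $d(E(y,t),\p M)=t$ there. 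In particular $\overline M\setminus\mathrm{Cut}(\p M)=\{E(y,t):0\le t<\rho(y)\}$, which is already the target set of property (3).

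Next I would choose the cutting functions. Using the continuity and positivity of $\rho$, one selects smooth $\rho_k\colon\p M\to(0,\infty)$ with $0<\rho_k<\rho_{k+1}<\rho$ pointwise and $\rho_k\to\rho$ locally uniformly; this is the standard monotone smooth approximation of a positive continuous function from strictly below, obtained for instance by mollifying $(1-\tfrac1k)\rho$ in coordinate charts, patching by a partition of unity, and passing to a smooth running maximum to enforce monotonicity. I then set
\begin{equation*}
\Omega_k:=\{E(y,t): y\in\p M,\ 0\le t\le\rho_k(y)\},\qquad \Sigma_k:=\{E(y,\rho_k(y)):y\in\p M\}.
\end{equation*}
Since $\rho_k<\rho$, the map $E$ is a diffeomorphism on a neighborhood of the closed region $\{0\le t\le\rho_k(y)\}$, so each $\Omega_k$ is a closed subset of $\overline M$ and $\Sigma_k$ is a smoothly embedded hypersurface (the graph $t=\rho_k(y)$). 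This yields property (1): $\p\Omega_k=\p M\cup\Sigma_k$ with $\Sigma_k$ interior, hence $\p\Omega_k\cap\p M=\p M$. Property (2) is immediate from $\rho_{k_1}<\rho_{k_2}$ together with the injectivity of $E$, and property (3) follows from $\rho_k\nearrow\rho$: a point $E(y,t)$ with $t<\rho(y)$ lies in $\Omega_k$ as soon as $\rho_k(y)\ge t$.

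The sign condition in property (4) is the one genuine computation, and it is clean in Fermi coordinates. On $\overline M\setminus\mathrm{Cut}(\p M)$ the metric in the coordinates $(y,t)$ furnished by $E$ splits as $dt^2+g_t(y)$, with $\nabla d(\cdot,\p M)=\partial_t$ and $\partial_t$ orthogonal to the slices $\{t=\mathrm{const}\}$. Writing $\Sigma_k$ as the zero set of $F(x):=d(x,\p M)-\rho_k(y(x))$, which is smooth near $\Sigma_k$ because $\rho_k<\rho$, the outward normal of $\Omega_k$ is $\nu_k=\nabla F/|\nabla F|$. Since $\rho_k\circ y$ depends only on the slice variable, $\nabla(\rho_k\circ y)$ is tangent to the slices and orthogonal to $\partial_t$, so
\begin{equation*}
\langle \nu_k,\nabla d(\cdot,\p M)\rangle=\frac{\langle\partial_t-\nabla(\rho_k\circ y),\partial_t\rangle}{|\nabla F|}=\frac{1}{|\nabla F|}>0,
\end{equation*}
which is the desired inequality.

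I expect the main obstacle to lie in fact (ii) and the approximation step rather than in the final computation: one must know that the cut distance $\rho$ is continuous and that $E$ stays a diffeomorphism all the way up to (but not including) $t=\rho(y)$, and then arrange the smooth $\rho_k$ to remain strictly below $\rho$ while still exhausting it monotonically near the cut locus. Both are classical properties of the cut locus of a hypersurface, so the construction above reduces the lemma to invoking them; the positivity $\rho>0$, guaranteed by the embeddedness of $\p M$, is precisely what keeps each $\Omega_k$ a collar-type neighborhood genuinely containing $\p M$.
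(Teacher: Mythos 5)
Your proof is correct and is essentially the argument behind the paper's citation: the paper does not prove this lemma itself but quotes it from Sakurai \cite[Lemma 2.5]{Sakurai19}, where the construction is the same sub-graph exhaustion you describe --- smooth functions $\rho_k$ strictly below the cut-distance function of $\p M$, the regions $\Omega_k$ cut out in Fermi coordinates, and the outward normal computed exactly as in your $F=d(\cdot,\p M)-\rho_k\circ y$ calculation, giving $\langle\nu_k,\nabla d\rangle=1/|\nabla F|>0$. The remaining loose ends you flag are genuinely classical (continuity of the cut distance, $E$ a diffeomorphism below it), and in the paper's setting $M$ is compact, which gives closedness of each $\Omega_k$ and removes the need to truncate where $\rho=\infty$.
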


\begin{proof}[Proof of Proposition \ref{prop eigenfunction}]
(1). Direct calculation using Proposition \ref{thm laplace comparison} shows 
\begin{equation*}
    \Delta_p v = (p-1)|\vp'|^{p-2}\vp'' +|\vp'|^{p-2}\vp' \Delta d(x,\p M) \leq -\bar\l_p |v|^{p-2} v
\end{equation*}
on the set $M \setminus \text{Cut}(M)$. 
Since $v$ is smooth in any $\Omega_k$, we have for any nonnegative function $\eta \in C^1(\overline{M})$, 
\begin{eqnarray*}
&& \int_{\Omega_k} |\nabla v|^{p-2}\langle \nabla v, \nabla \eta \rangle \, d\mu_g \\
&=& -\int_{\Omega_k} \Delta_p v \, \eta \, d\mu_g +\int_{\p \Omega_k} |\nabla v|^{p-2} \frac{\p v}{\p \nu_k} \, \eta \, dA \\
&\geq & \bar\l_p \int_{\Omega_k} |v|^{p-2}v \eta \, d\mu_g +\int_{\p \Omega_k \cap \p M} |\nabla v|^{p-2} \frac{\p v}{\p \nu_k} \, \eta \, dA  +\int_{\p \Omega_k \setminus \p M} |\nabla v|^{p-2} \frac{\p v}{\p \nu_k} \, \eta \, dA \\
&=& \bar\l_p \int_{\Omega_k} |v|^{p-2}v \eta \, d\mu_g +\a \int_{\p M} |v|^{p-2}v \, \eta \, dA +\int_{\p \Omega_k \setminus \p M} |\nabla v|^{p-2} \vp' \langle \nabla d(x, \p M), \nu_k \rangle \, \eta \, dA\\
& \geq &  \bar\l_p \int_{\Omega_k} |v|^{p-2}v \eta\, d\mu_g  +\a \int_{\p M} |v|^{p-2}v \eta\, dA,
\end{eqnarray*}
where we used $\vp'>0$ and  $\langle \nu_k, \nabla d(x,\p M) \rangle \geq 0$.
Letting $k\to \infty$ yields that $v$ satisfies 
$$
\int_{M} |\nabla v|^{p-2}\langle \nabla v, \nabla \eta \rangle \, d\mu_g \ge \bar\l_p \int_{M} |v|^{p-1}v \eta \, d\mu_g +\a \int_{\p M} |v|^{p-2}v  \, \eta\, dA .
$$
Thus, we conclude that $v$ satisfies 
\begin{equation*}
    \begin{cases}
    -\Delta_{p} v  \geq \bar\l_p |v|^{p-2} v, & \text{ in } M, \\
    \frac{\p v}{\p \nu} |\nabla v|^{p-2} +\a |v|^{p-2}v \geq 0, & \text{ on } \p M,  
    \end{cases}
\end{equation*}
in the distributional sense.

(2).  The proof is similar to (1) and we omit the details. 
\end{proof}

\begin{proof}[Proof of Theorem \ref{Thm C}]
If $\a >0$, then by Proposition \ref{prop eigenfunction}, the function $v(x)=\vp(d(x,\p M)$ satisfies
\begin{equation*}
    \begin{cases}
    -\Delta_{p} v  \geq \bar\l_p |v|^{p-2} v, & \text{ in } M, \\
    \frac{\p v}{\p \nu} |\nabla v|^{p-2} +\a |v|^{p-2}v \geq 0, & \text{ on } \p M,  
    \end{cases}
\end{equation*}
in the distributional sense. 
Then Barta's inequality in Theorem \ref{Thm Barta} implies that $\l_p(M,\a) \geq \bar\l_p =\bar\l_p \left([0,R], \a\right)$.  The $\a <0$ case is completely similar. 

If the equality is in Theorem \ref{Thm C} achieved, then by the rigidity in part (1) of Theorem \ref{Thm Barta}, $v(x)=\vp(d(x, \p M)$ is indeed a constant mutilple of the first eigenfunction associated to $\l_p(M,\a)$. Same argument as in \cite[page 37]{Kasue84} or \cite[page 101]{Savo20} shows that $(M^n,g)$ is a $(\kappa, \Lambda)$-model space. 

\end{proof}

\section{Equality Case in Theorem \ref{Thm C} and Model Spaces}
In order to characterize the equality case in Theorem \ref{Thm C}, we need the notion of $(\kappa, \Lambda)$-model spaces introduced by Kasue \cite{Kasue84}. For this purpose, we introduce the following notations 
\begin{eqnarray*}
Z_{\kappa, \Lambda} &:=& \inf \{t>0 :  C_{\kappa, \Lambda}(t) =0\},\\
Y_{\kappa, \Lambda} &:=& \inf \{t \in (0, C_{\kappa, \Lambda}] :  C'_{\kappa, \Lambda}(t) =0\}.
\end{eqnarray*}
Here we understand $Z_{\kappa, \Lambda} =\infty$ if $C_{\kappa, \Lambda}$ does not vanish on $(0,\infty)$ and $Y_{\kappa, \Lambda} =\infty$ if $C'_{\kappa, \Lambda}$ does not vanish on $[0, C_{\kappa, \Lambda}]$. 
It's easy to see that $0< Z_{\kappa, \Lambda} <\infty$
if and only if either $\kappa >0$, or $\kappa=0$ and $\Lambda>0$, or $\kappa <0$, and that $\Lambda > \sqrt{|\kappa|}$ and $0< Y_{\kappa, \Lambda} <\infty$ if and only if either $\kappa >0$ and $\Lambda<0$, or $\kappa=0$ and $\Lambda =0$, or $\kappa <0$ and $0< \Lambda < \sqrt{|\kappa|}$.

Let $M^n(\kappa)$ denote the simply-connected $n$-dimensional space with constant sectional curvature $\kappa$. 
\begin{definition}\label{def model space}
A compact Riemannian manifold $(M^n,g)$ with boundary is called a $(\kappa, \Lambda)$-model space if one of the following conditions holds:
\begin{enumerate}
     \item $Z_{\kappa, \Lambda} < \infty$ and $M$ is isometric to the closed geodesic ball of radius $Z_{\kappa, \Lambda}$  in $M^n(\kappa)$.
     \item $\kappa=\Lambda=0$, or $0< Y_{\kappa, \Lambda} < \infty$. Moreover, $M$ is isometric to the warped product $[0, 2a] \times_{C_{\kappa, \Lambda}} \Gamma$, where $\Gamma$ is connected component of $\p M$ and $a$ is a positive number if $\kappa=\Lambda=0$, and $a=Y_{\kappa, \Lambda}$ if $0< Y_{\kappa, \Lambda} < \infty$. In this case, $\p M$ is disconnected. 
     \item $\kappa=\Lambda=0$, or $0< Y_{\kappa, \Lambda} < \infty$. Moreover, $\p M$ is connected and there is an involutive isometry $\sigma$ of $\p M$ without fixed points, and $M$ is isometric to the quotient space $[0, 2a] \times_{C_{\kappa, \Lambda}} \p M/G_{\sigma}$, where $a$ and $h$ are the same as in (2) and $G_{\sigma}$ is the isometry group on $[0, 2a] \times_{C_{\kappa, \Lambda}} \p M/G_{\sigma}$ whose elements consist of the identity and and the involutive isometry 
     $\hat{\sigma}$ defined by $\hat{\sigma}(t,x) =(2a-t, \sigma(x) )$. 
      \end{enumerate}
\end{definition}

From a standard argument (see for instance \cite[Section 1.3]{Kasue84} ), one sees that when $M$ is a $(\kappa, \Lambda)$-model space, the first Robin eigenfunction of the $p$-Laplacian can be written in the form 
\begin{equation*}
    u=\vp \circ d(x, \p M), 
\end{equation*}
where $\vp$ is a smooth function on $[0,R]$ satisfying 
\begin{equation*}
    \begin{cases} 
    (p-1)|\vp'|^{p-2} \vp'' +(n-1) T_{\kappa, \Lambda} |\vp'|^{p-2}\vp' =-\l_{p}(M,\a) |\vp|^{p-2}\vp, \\
    |\vp'(0)|^{p-2}\vp'(0)=\a |\vp(0)|^{p-2}\vp(0), \\
    \vp'(R)=0,
    \end{cases}
\end{equation*}
which gives the equality case in Theorem \ref{Thm C}. 

\section*{Acknowledgments}
The first author would like to thank Professor Richard Schoen for his support and interest in this work. 
Both authors are grateful to Professor Lei Ni for his encouragement and helpful conversations.

\bibliographystyle{plain}
\bibliography{ref}

\begin{thebibliography}{10}

\bibitem{AH98}
Walter Allegretto and Yin~Xi Huang.
\newblock A {P}icone's identity for the {$p$}-{L}aplacian and applications.
\newblock {\em Nonlinear Anal.}, 32(7):819--830, 1998.

\bibitem{AC11}
Ben Andrews and Julie Clutterbuck.
\newblock Proof of the fundamental gap conjecture.
\newblock {\em J. Amer. Math. Soc.}, 24(3):899--916, 2011.

\bibitem{AC13}
Ben Andrews and Julie Clutterbuck.
\newblock Sharp modulus of continuity for parabolic equations on manifolds and
  lower bounds for the first eigenvalue.
\newblock {\em Anal. PDE}, 6(5):1013--1024, 2013.

\bibitem{ACH20}
Ben Andrews, Julie Clutterbuck, and Daniel Hauer.
\newblock Non-concavity of the {R}obin ground state.
\newblock {\em Camb. J. Math.}, 8(2):243--310, 2020.

\bibitem{AN12}
Ben Andrews and Lei Ni.
\newblock Eigenvalue comparison on {B}akry-{E}mery manifolds.
\newblock {\em Comm. Partial Differential Equations}, 37(11):2081--2092, 2012.

\bibitem{BQ00}
Dominique Bakry and Zhongmin Qian.
\newblock Some new results on eigenvectors via dimension, diameter, and {R}icci
  curvature.
\newblock {\em Adv. Math.}, 155(1):98--153, 2000.

\bibitem{Chavel84}
Isaac Chavel.
\newblock {\em Eigenvalues in {R}iemannian geometry}, volume 115 of {\em Pure
  and Applied Mathematics}.
\newblock Academic Press, Inc., Orlando, FL, 1984.
\newblock Including a chapter by Burton Randol, With an appendix by Jozef
  Dodziuk.

\bibitem{CW94}
Mu~Fa Chen and Feng~Yu Wang.
\newblock Application of coupling method to the first eigenvalue on manifold.
\newblock {\em Sci. China Ser. A}, 37(1):1--14, 1994.

\bibitem{CW95}
Mufa Chen and Fengyu Wang.
\newblock Application of coupling method to the first eigenvalue on manifold.
\newblock {\em Progr. Natur. Sci. (English Ed.)}, 5(2):227--229, 1995.

\bibitem{Cheng75a}
Shiu~Yuen Cheng.
\newblock Eigenvalue comparison theorems and its geometric applications.
\newblock {\em Math. Z.}, 143(3):289--297, 1975.

\bibitem{GS05}
Tiziana Giorgi and Robert~G. Smits.
\newblock Monotonicity results for the principal eigenvalue of the generalized
  {R}obin problem.
\newblock {\em Illinois J. Math.}, 49(4):1133--1143, 2005.

\bibitem{ShapeOptmizationbook}
Antoine Henrot, editor.
\newblock {\em Shape optimization and spectral theory}.
\newblock De Gruyter Open, Warsaw, 2017.

\bibitem{Kasue82}
Atsushi Kasue.
\newblock A {L}aplacian comparison theorem and function theoretic properties of
  a complete {R}iemannian manifold.
\newblock {\em Japan. J. Math. (N.S.)}, 8(2):309--341, 1982.

\bibitem{Kasue84}
Atsushi Kasue.
\newblock On a lower bound for the first eigenvalue of the {L}aplace operator
  on a {R}iemannian manifold.
\newblock {\em Ann. Sci. \'{E}cole Norm. Sup. (4)}, 17(1):31--44, 1984.

\bibitem{Koerber18}
Thomas Koerber.
\newblock Sharp estimates for the principal eigenvalue of the {$p$}-operator.
\newblock {\em Calc. Var. Partial Differential Equations}, 57(2):Art. 49, 30,
  2018.

\bibitem{Kroger92}
Pawel Kr\"{o}ger.
\newblock On the spectral gap for compact manifolds.
\newblock {\em J. Differential Geom.}, 36(2):315--330, 1992.

\bibitem{Laugesen19}
Richard~S. Laugesen.
\newblock The {R}obin {L}aplacian---{S}pectral conjectures, rectangular
  theorems.
\newblock {\em J. Math. Phys.}, 60(12):121507, 31, 2019.

\bibitem{Le06}
An~L\^{e}.
\newblock Eigenvalue problems for the {$p$}-{L}aplacian.
\newblock {\em Nonlinear Anal.}, 64(5):1057--1099, 2006.

\bibitem{Li79}
Peter Li.
\newblock A lower bound for the first eigenvalue of the {L}aplacian on a
  compact manifold.
\newblock {\em Indiana Univ. Math. J.}, 28(6):1013--1019, 1979.

\bibitem{LY80}
Peter Li and Shing~Tung Yau.
\newblock Estimates of eigenvalues of a compact {R}iemannian manifold.
\newblock In {\em Geometry of the {L}aplace operator ({P}roc. {S}ympos. {P}ure
  {M}ath., {U}niv. {H}awaii, {H}onolulu, {H}awaii, 1979)}, Proc. Sympos. Pure
  Math., XXXVI, pages 205--239. Amer. Math. Soc., Providence, R.I., 1980.

\bibitem{Li16}
Xiaolong Li.
\newblock Moduli of continuity for viscosity solutions.
\newblock {\em Proc. Amer. Math. Soc.}, 144(4):1717--1724, 2016.

\bibitem{Li20modulus}
Xiaolong Li.
\newblock Modulus of continuity estimates for fully nonlinear parabolic
  equations.
\newblock {\em arXiv:2006.16631}, 2020.

\bibitem{LTW20}
Xiaolong Li, Yucheng Tu, and Kui Wang.
\newblock On a class of quasilinear operators on smooth metric measure spaces.
\newblock {\em arXiv:2009.10418}, 2020.

\bibitem{LW17}
Xiaolong Li and Kui Wang.
\newblock Moduli of continuity for viscosity solutions on manifolds.
\newblock {\em J. Geom. Anal.}, 27(1):557--576, 2017.

\bibitem{LW19eigenvalue}
Xiaolong Li and Kui Wang.
\newblock Sharp lower bound for the first eigenvalue of the weighted
  $p$-{L}aplacian.
\newblock {\em arXiv:1910.02295}, 2019.

\bibitem{LW19eigenvalue2}
Xiaolong Li and Kui Wang.
\newblock Sharp lower bound for the first eigenvalue of the weighted
  $p$-{L}aplacian {II}.
\newblock {\em Math. Res. Lett, to appear, arXiv:1911.04596}, 2019.

\bibitem{LWW20}
Xiaolong Li, Kui Wang, and Haotian Wu.
\newblock On the second {R}obin eigenvalue of the {L}aplacian.
\newblock {\em arXiv:2003.03087}, 2020.

\bibitem{LL10}
Jun Ling and Zhiqin Lu.
\newblock Bounds of eigenvalues on {R}iemannian manifolds.
\newblock In {\em Trends in partial differential equations}, volume~10 of {\em
  Adv. Lect. Math. (ALM)}, pages 241--264. Int. Press, Somerville, MA, 2010.

\bibitem{Matei00}
Ana-Maria Matei.
\newblock First eigenvalue for the {$p$}-{L}aplace operator.
\newblock {\em Nonlinear Anal.}, 39(8, Ser. A: Theory Methods):1051--1068,
  2000.

\bibitem{NV14}
Aaron Naber and Daniele Valtorta.
\newblock Sharp estimates on the first eigenvalue of the {$p$}-{L}aplacian with
  negative {R}icci lower bound.
\newblock {\em Math. Z.}, 277(3-4):867--891, 2014.

\bibitem{Ni13}
Lei Ni.
\newblock Estimates on the modulus of expansion for vector fields solving
  nonlinear equations.
\newblock {\em J. Math. Pures Appl. (9)}, 99(1):1--16, 2013.

\bibitem{Sakai96}
Takashi Sakai.
\newblock {\em Riemannian geometry}, volume 149 of {\em Translations of
  Mathematical Monographs}.
\newblock American Mathematical Society, Providence, RI, 1996.
\newblock Translated from the 1992 Japanese original by the author.

\bibitem{Sakurai19}
Yohei Sakurai.
\newblock Rigidity of manifolds with boundary under a lower {B}akry-\'{E}mery
  {R}icci curvature bound.
\newblock {\em Tohoku Math. J. (2)}, 71(1):69--109, 2019.

\bibitem{Savo20}
Alessandro Savo.
\newblock Optimal eigenvalue estimates for the {R}obin {L}aplacian on
  {R}iemannian manifolds.
\newblock {\em J. Differential Equations}, 268(5):2280--2308, 2020.

\bibitem{SYbook}
R.~Schoen and S.-T. Yau.
\newblock {\em Lectures on differential geometry}.
\newblock Conference Proceedings and Lecture Notes in Geometry and Topology, I.
  International Press, Cambridge, MA, 1994.

\bibitem{Takeuchi98}
Hiroshi Takeuchi.
\newblock On the first eigenvalue of the {$p$}-{L}aplacian in a {R}iemannian
  manifold.
\newblock {\em Tokyo J. Math.}, 21(1):135--140, 1998.

\bibitem{Tu20}
Yucheng Tu.
\newblock Sharp lower bound for the principal eigenvalue of the $p$-operator on
  smooth metric measure spaces.
\newblock {\em arXiv:2008.00185}, 2020.

\bibitem{Valtorta12}
Daniele Valtorta.
\newblock Sharp estimate on the first eigenvalue of the {$p$}-{L}aplacian.
\newblock {\em Nonlinear Anal.}, 75(13):4974--4994, 2012.

\bibitem{WZ17}
Yuntao Zhang and Kui Wang.
\newblock An alternative proof of lower bounds for the first eigenvalue on
  manifolds.
\newblock {\em Math. Nachr.}, 290(16):2708--2713, 2017.

\bibitem{ZY84}
Jia~Qing Zhong and Hong~Cang Yang.
\newblock On the estimate of the first eigenvalue of a compact {R}iemannian
  manifold.
\newblock {\em Sci. Sinica Ser. A}, 27(12):1265--1273, 1984.

\end{thebibliography}

\end{document}